\newtheorem*{theorem}{Theorem}
\newtheorem{lemma}{Lemma}
\newtheorem*{corollary}{Corollary}
\newtheorem{jumping-champion-conjecture}{Conjecture}
\newtheorem*{conjecture-a}{Conjecture A}
\newtheorem*{conjecture-b}{Conjecture B}
\newcommand{\abs}[1]{\lvert#1\rvert}
\begin{document}

\title[The jumping champion conjecture]{The jumping champion conjecture}


\author[Daniel A. Goldston]{Daniel A. Goldston$^{*}$}
\address{Department of Mathematics, San Jos\'{e} State University, 315 MacQuarrie Hall, One Washington Square, San Jos\'{e}, California 95192-0103, United States of America}
\email{daniel.goldston@sjsu.edu}
\thanks{$^{*}$During the preparation of this work, the first author received support from the National Science Foundation Grant DMS-0804181.}

\author[Andrew H. Ledoan]{Andrew H. Ledoan}
\address{Department of Mathematics, Boston College, 228 Carney Hall, 140 Commonwealth Avenue, Chestnut Hill, Massachusetts 02467-3806, United States of America}
\curraddr{Department of Mathematics, The University of Tennessee at Chattanooga, 417F EMCS Building (Department 6956), 615 McCallie Avenue, Chattanooga, Tennessee 37403-2598, United States of America}
\email{andrew-ledoan@utc.edu}

\subjclass[2010]{Primary 11N05; Secondary 11P32, 11N36.}



\keywords{Differences between consecutive primes; Hardy-Littlewood prime $k$-tuple conjecture; jumping champion; maximal prime gaps; primorial numbers; sieve methods; singular series.}

\begin{abstract}
An integer $d$ is called a jumping champion for a given $x$ if $d$ is the most common gap between consecutive primes up to $x$. Occasionally several gaps are equally common. Hence, there can be more than one jumping champion for the same $x$. For the $n$th prime $p_{n}$, the $n$th primorial $p_{n}^{\sharp}$ is defined as the product of the first $n$ primes. In 1999, Odlyzko, Rubinstein and Wolf provided convincing heuristics and empirical evidence for the truth of the hypothesis that the jumping champions greater than 1 are 4 and the primorials $p_{1}^{\sharp}, p_{2}^{\sharp}, p_{3}^{\sharp}, p_{4}^{\sharp}, p_{5}^{\sharp}, \ldots$, that is, $2, 6, 30, 210, 2310, \ldots.$ In this paper, we prove that an appropriate form of the Hardy-Littlewood prime $k$-tuple conjecture for prime pairs and prime triples implies that all sufficiently large jumping champions are primorials and that all sufficiently large primorials are jumping champions over a long range of $x$.
\end{abstract}

\maketitle


\thispagestyle{empty}

\section{Introduction and statement of results}

In 1993, John Horton Conway invented the term jumping champion to refer to the most common gap between consecutive primes not exceeding $x$. For the $n$th prime $p_{n}$, the jumping champions are the values of the integer $d$ for which the counting function
\[
N(x, d)
 = \sum_{\substack{p_{n} \leq x \\ p_{n} - p_{n - 1} = d}} 1
\]
attains its maximum
\[
N^{*}(x)
 = \max_{d} N(x, d).
\]
Thus, the set of jumping champions for primes not exceeding $x$ is defined by
\[
\mathscr{D}^{*}(x)
 = \{ d^{*} \colon N(x, d^{*}) = N^{*}(x)\}.
\]
Furthermore, the $n$th primorial $p_{n}^{\sharp}$ is defined as the product of the first $n$ primes.

In 1999, Odlyzko, Rubinstein and Wolf \cite{OdlyzkoRubinsteinWolf1999} enunciated the following hypothesis, now usually known as the jumping champion conjecture.

\begin{jumping-champion-conjecture} \label{first-jumping-champion-conjecture}
The jumping champions greater than 1 are 4 and the primorials $p_{1}^{\sharp}, p_{2}^{\sharp}, p_{3}^{\sharp}, p_{4}^{\sharp}, p_{5}^{\sharp}, \ldots$, that is, $2, 6, 30, 210, 2310, \ldots.$
\end{jumping-champion-conjecture}

Odlyzko, Rubinstein and Wolf also introduced a weaker hypothesis that follows from Conjecture \ref{first-jumping-champion-conjecture}.

\begin{jumping-champion-conjecture} \label{second-jumping-champion-conjecture}
The jumping champions tend to infinity. Furthermore, any fixed prime $p$ divides all sufficiently large jumping champions.
\end{jumping-champion-conjecture}

The first assertion of Conjecture \ref{second-jumping-champion-conjecture} was proved by Erd\H{o}s and Straus \cite{ErdosStraus1980} in 1980, under the assumption of the truth of the Hardy-Littlewood prime pair conjecture. Very recently, we have extended their method to give a complete proof of Conjecture \ref{second-jumping-champion-conjecture}, again under the same assumption. (See Goldston and Ledoan \cite{GoldstonLedoan2011}.)

In the present paper, we consider Conjecture \ref{first-jumping-champion-conjecture} with the aid of the Hardy-Littlewood prime $k$-tuple conjecture. It is clear that in order to determine completely the jumping champions one requires information about prime triples, in addition to a method for eliminating larger than average gaps between primes from being jumping champions. The solution to the latter problem, as simple as it turned out to be, is fundamental to the success of our work.

We defer stating precisely the Hardy-Littlewood prime $k$-tuple conjecture for prime pairs (that is, Conjecture A) and prime triples (that is, Conjecture B) until the next section. Our main result can be summarized as follows, its details as described in the original work by Odlyzko, Rubinstein and Wolf.

\begin{theorem}
Assume Conjectures A and B. Let $a$, $a'$, $b$ and $b'$ be any fixed numbers  satisfying $3 / 4 \leq a' < b < 1 < a < b' \leq 5 / 4$ and let $x$ be sufficiently large. Then the interval
\[
\left[\frac{a \log x}{(\log\log x)^2}, \frac{b \log x}{\log\log x}\right]
\]
can contain at most one primorial. If this interval contains a primorial, then this primorial will be the jumping champion for $x$. If this interval does not contain a primorial, then the two intervals
\[
\left[\frac{a' \log x}{(\log\log x)^{2}}, \frac{a\log x}{(\log\log x)^{2}}\right)
\quad \mbox{and} \quad
\left(\frac{b \log x}{\log\log x}, \frac{b'\log x}{\log\log x}\right]
\]
will each contain a primorial. Furthermore, one or the other and sometimes both of these primorials will be the jumping champion for $x$.
\end{theorem}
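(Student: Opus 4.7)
My plan is to combine a simple multiplicative-geometric argument about primorials with a detailed asymptotic analysis of $N(x,d)$ derived from Conjectures A and B via inclusion-exclusion, then to compare the resulting values across all feasible $d$.

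The ``at most one primorial'' claim and the outer containment claims are essentially consequences of the prime number theorem. Consecutive primorials satisfy $p_{n+1}^{\sharp}/p_{n}^{\sharp}=p_{n+1}$, and any primorial in the intervals under consideration has logarithm $\sim \log\log x$, so the prime number theorem applied to Chebyshev's $\theta$ function gives $p_{n}=(1+o(1))\log\log x$. The central interval has multiplicative width $(b/a)\log\log x<\log\log x$, so it houses at most one primorial; when it houses none, writing $p_{n}^{\sharp}<a\log x/(\log\log x)^{2}$ and $p_{n+1}^{\sharp}=p_{n+1}\,p_{n}^{\sharp}>b\log x/\log\log x$ and dividing by $p_{n+1}=(1+o(1))\log\log x$, the strict inequalities $a'<b$ and $a<b'$ force $p_{n}^{\sharp}$ and $p_{n+1}^{\sharp}$ into the asserted flanking intervals once $x$ is large enough.

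For the analytic part I would write the condition $p_{n-1}=p$, $p_{n}=p+d$ as ``$p$ and $p+d$ are both prime, and $p+e$ is composite for every $0<e<d$'', and expand by inclusion-exclusion. Conjecture A bounds the leading term above by $(1+o(1))\,\mathfrak{S}(d)\,\mathrm{Li}_{2}(x)$, where $\mathfrak{S}(d)=2C_{2}\prod_{p\mid d,\,p>2}(p-1)/(p-2)$ is the pair singular series; using Conjecture B on the triple terms and truncating the inclusion-exclusion, the lower bound matches, yielding
\[
N(x,d)=\mathfrak{S}(d)\,\mathrm{Li}_{2}(x)\bigl(1-(C+o(1))\,d/\log x+O((d/\log x)^{2})\bigr)
\]
uniformly for $d=O(\log x/\log\log x)$. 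The multiplicative function $\mathfrak{S}$ is maximized over $d\leq y$ at the largest primorial $\leq y$, with $\mathfrak{S}(p_{n}^{\sharp})\sim 2C_{2}e^{-\gamma}\log p_{n}$ by Mertens' theorem. The approximate objective $\log\mathfrak{S}(d)-Cd/\log x$ is therefore peaked at a primorial of size close to $\log x/\log\log x$, placing it in the central interval when one exists there, or in one of the flanking intervals otherwise. A finite case analysis then compares the candidate primorial(s) against all other $d$: non-primorial $d$ of the right size lose by a factor $\gtrsim\log\log x$ in $\mathfrak{S}$; primorials with $d\ll\log x/(\log\log x)^{2}$ have $\mathfrak{S}$ too small; and primorials with $d\gg\log x/\log\log x$ are killed by the decay factor.

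The principal obstacle is quantitative uniformity. Adjacent primorials differ by a factor $\sim\log\log x$ in size but only by a factor $(p_{n+1}-1)/(p_{n+1}-2)=1+O(1/\log\log x)$ in $\mathfrak{S}$, while the decay term $\exp(-Cd/\log x)$ has size $\exp(O(1/\log\log x))$ in the relevant range. The two effects are of exactly the same order of magnitude, so the decay factor is the genuine tiebreaker, and the inclusion-exclusion error must be controlled to precision $o(1/\log\log x)$ uniformly in $d$ for the comparison to go through. For $d\gg\log x/\log\log x$, truncated inclusion-exclusion no longer yields a matching lower bound, but in that regime the trivial upper bound $N(x,d)\leq (1+o(1))\mathfrak{S}(d)\,\mathrm{Li}_{2}(x)$ together with the already-established lower bound for the central (or flanking) primorial is more than enough to close the argument.
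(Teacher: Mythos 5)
Your proposal parallels the paper's overall architecture: inclusion-exclusion driven by Conjectures A and B to obtain the expansion of $N(x,d)$, the observation that $\mathfrak{S}(d)$ is maximized over $[2,y]$ at the largest primorial $\leq y$, the decay factor $1-d/\log x$ as the tiebreaker placing the champion near $\log x/\log\log x$, and the prime-number-theorem argument governing the spacing of consecutive primorials (which correctly gives the ``at most one primorial'' and the flanking-interval claims). The delicate balance you describe between the $1/\log\log x$ singular-series gain of one primorial step and the $p_k^\sharp/\log x$ decay loss is exactly the tension the paper's Lemmas 3--5 resolve.

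There is, however, a genuine gap in your last paragraph. You claim that for $d\gg\log x/\log\log x$ the trivial upper bound $N(x,d)\leq(1+o(1))\mathfrak{S}(d)\,x/(\log x)^{2}$ suffices once combined with the lower bound for the candidate primorial $p_k^\sharp$. It does not. For $d$ running up to $(\log x)^{2}$ (and Conjecture A applies in exactly this range), the singular series can grow: if $d$ is a multiple of a primorial larger than $p_k^\sharp$, then $\mathfrak{S}(d)$ can be as large as $\mathfrak{S}\bigl(\lfloor(\log x)^{2}\rfloor_{p_j^\sharp}\bigr)=\mathfrak{S}(p_k^\sharp)\bigl[1+O(1/\log\log\log x)\bigr]$. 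The trivial bound then reads
\[
N(x,d)\leq \mathfrak{S}(p_k^\sharp)\frac{x}{(\log x)^{2}}\Bigl[1+O\Bigl(\frac{1}{\log\log\log x}\Bigr)\Bigr],
\]
while your expansion gives $N(x,p_k^\sharp)=\mathfrak{S}(p_k^\sharp)\,x(\log x)^{-2}\bigl[1-O(1/\log\log x)\bigr]$. Since $1/\log\log\log x$ dwarfs $1/\log\log x$, the singular-series gain of $d$ overwhelms the decay loss of $p_k^\sharp$, and the comparison fails; the trivial bound cannot eliminate these $d$.

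The idea you are missing, which the paper flags in its introduction as its essential extra ingredient, is to truncate the inclusion-exclusion at a parameter $D$ chosen \emph{independently} of $d$, with $D$ much smaller than $d$. One still gets the one-sided bound
\[
N(x,d)\leq\pi_{2}(x,d)-\sum_{1\leq d'<D}\pi_{3}(x,d',d)+\sum_{1\leq d_1<d_2<D}\pi_{4}(x,d_1,d_2,d)
\]
for any $1\leq D\leq d$, and choosing $D=\log x/(\log\log\log x)^{1/2}$ (so that the quadruple term, controlled by the sieve bound and the singular-series average, remains $O((D/\log x)^{2})$) yields
\[
N(x,d)\leq\mathfrak{S}(d)\frac{x}{(\log x)^{2}}\Bigl[1-\frac{1}{(\log\log\log x)^{1/2}}\bigl(1+o(1)\bigr)\Bigr],
\]
uniformly for $D\leq d\leq(\log x)^{2}$. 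The decay $1/(\log\log\log x)^{1/2}$ now beats the $1/\log\log\log x$ singular-series gain, and this, not the trivial bound, is what kills the large-$d$ range. For $d\geq(\log x)^{2}$ the paper uses a completely elementary bound $N(x,d)\leq x/d$. You should also note that your expansion of $N(x,d)$ must be pushed to $d=o(\log x)$ rather than $d=O(\log x/\log\log x)$, since the intermediate range $\log x/\log\log x\ll d\ll\log x/(\log\log\log x)^{1/2}$ must be handled by the honest two-term expansion before the truncated bound takes over.
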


We can reformulate the theorem in terms of a range of $x$ over which a primorial is the jumping champion.

\begin{corollary}
Assume Conjectures A and B. Then for any $0 < \delta < 1 / 2$ and $k$ sufficiently large, the $k$th primorial $p_{k}^{\sharp}$ is the jumping champion for all $x$ in the interval
\[
[\exp((1 + \delta) p_{k}^{\sharp} \log p_{k}^{\sharp}), \exp((1 - \delta) p_{k}^{\sharp}(\log p_{k}^{\sharp})^{2})].
\]
\end{corollary}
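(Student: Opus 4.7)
The plan is to derive the corollary directly from the Theorem by choosing parameters $a$ and $b$ (the auxiliary $a'$ and $b'$ play no active role) so that, for every $x$ in the specified range, the primorial $p_{k}^{\sharp}$ lies inside the central interval $[a\log x/(\log\log x)^{2},\, b\log x/\log\log x]$. Once this containment is established, the Theorem identifies $p_{k}^{\sharp}$ as the unique primorial in that interval, and hence as the jumping champion for $x$.

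Given $\delta \in (0, 1/2)$, I would fix $a$ with $1 < a < \min\{5/4,\, 1/(1-\delta)\}$ and $b$ with $\max\{3/4,\, 1/(1+\delta)\} < b < 1$. Both intervals are nonempty: for $a$ this uses $1/(1-\delta) > 1$, and for $b$ the condition $1/(1+\delta) < 1$. These choices satisfy the Theorem's hypothesis $3/4 \leq a' < b < 1 < a < b' \leq 5/4$ upon picking $a'$ and $b'$ arbitrarily in the permitted ranges. Setting $t = \log x$ (so $\log\log x = \log t$), the containment we want becomes the pair
\[
\frac{\log t}{t} \leq \frac{b}{p_{k}^{\sharp}} \qquad \mbox{and} \qquad \frac{t}{(\log t)^{2}} \leq \frac{p_{k}^{\sharp}}{a},
\]
to be checked for every $t$ in the range $J_{k} = [(1+\delta) p_{k}^{\sharp}\log p_{k}^{\sharp},\, (1-\delta) p_{k}^{\sharp}(\log p_{k}^{\sharp})^{2}]$, on which one has the uniform asymptotic $\log t = \log p_{k}^{\sharp} + O(\log\log p_{k}^{\sharp})$.

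Since $\log t/t$ is eventually decreasing and $t/(\log t)^{2}$ is eventually increasing, it suffices to verify the first inequality at the left endpoint of $J_{k}$ and the second at its right endpoint. Inserting the asymptotic for $\log t$, the first reduces to $b(1+\delta) \geq 1 + o(1)$, which holds because $b > 1/(1+\delta)$, and the second reduces to $a(1-\delta) \leq 1 + o(1)$, which holds because $a < 1/(1-\delta)$. Both bounds are satisfied for $k$ sufficiently large, and the Theorem then completes the argument.

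The main (though essentially routine) obstacle is that the range $J_{k}$ is very wide, so one must ensure that the strict inequalities $a < 1/(1-\delta)$ and $b > 1/(1+\delta)$ leave enough slack to absorb the $o(1)$ errors uniformly across $J_{k}$. The hypothesis $\delta < 1/2$ provides more than enough room to make such a choice of $a$ and $b$ compatible with the Theorem's constraints.
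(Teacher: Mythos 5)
Your proof is correct and takes essentially the same route as the paper: show that the given range of $x$ forces $p_{k}^{\sharp}$ into the central interval $[a\log x/(\log\log x)^{2},\, b\log x/\log\log x]$ for admissible $a,b$, then invoke the Theorem. The paper achieves this by observing $\log p_k^\sharp \sim \log\log x$ and taking the concrete choices $a = 1+\delta/2$, $b = 1-\delta/2$ (which is exactly where the constraint $\delta < 1/2$ comes from), whereas you keep $a$ and $b$ abstract and check the two containment inequalities at the appropriate endpoints of the $\log x$-range; this is only a cosmetic difference.
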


For obviously
\[
(1 + \delta) p_{k}^{\sharp} \log p_{k}^{\sharp}
 \leq \log x
 \leq (1 - \delta) p_{k}^{\sharp} (\log p_{k}^{\sharp})^{2},
\]
from which follows that
\[
\log p_{k}^{\sharp}
 \sim \log \log x, \quad \mbox{as $x \to \infty$}.
\]
Thus, for $x$ sufficiently large,
\[
\left(1 + \frac{\delta}{2}\right) \frac{\log x}{(\log\log x)^{2}}
 \leq p_{k}^{\sharp}
 \leq \left(1 - \frac{\delta}{2}\right) \frac{\log x}{\log\log x}.
\]
Hence, the corollary.

Several authors have conducted extensive computations to determine the precise point of transition between jumping champions. For example, it was computed by Harley \cite{Harley1994} in 1994 that the transition from 6 to 30 should occur at $1.7 \cdot 10^{35}$, while the heuristics of Odlyzko, Rubinstein and Wolf suggest that 6 is the jumping champion up to about $1.7427 \cdot 10^{35}$.

Our results are not very helpful in this regard. Several intervals from the corollary with $\delta = 0$ are indicated in Table \ref{table_one} below. Even though these intervals may have the corresponding primorial as their jumping champion, they produce an enormous range for the transition zone.

\begin{table}[ht] \label{table_one} \vspace*{0ex}
\caption{Intervals where $p_{k}^{\sharp}$ is a likely jumping champion.}
\begin{tabular}{|rrr|}
\hline
$k$	&	$p_{k}^{\sharp}$	&	$[\exp(p_{k}^{\sharp} \log p_{k}^{\sharp}) , \exp(p_{k}^{\sharp} (\log p_{k}^{\sharp})^{2})]$	\\
\hline
$3$	&	6				&	$[4.67 \cdot 10^{4}, 2.32 \cdot 10^{8}]$				\\
$4$	&	30				&	$[2.06 \cdot 10^{44}, 5.24 \cdot 10^{150}]$			\\
$5$	&	210				&	$[4.64 \cdot 10^{487}, 4.01 \cdot 10^{2607}]$			\\
$6$	&	2310				&	$[8.78 \cdot 10^{7769}, 1.72 \cdot 10^{60178}]$		\\
$7$	&	30030			&	$[9.70 \cdot 10^{134460}, 1.72 \cdot 10^{1386286}]$	\\
\hline
\end{tabular}
\end{table}

There are several directions in which our method can be extended. One could, for example, employ more refined versions of Conjectures A and B and strive to be more precise about the transition zones. Likewise, one could presumably formulate explicit versions of these two conjectures substantiated by numerical experimentation and use them to derive a complete proof of Conjecture \ref{first-jumping-champion-conjecture}.

\section{The Hardy-Littlewood prime $k$-tuple conjectures}

In a pioneering paper published in 1923, Hardy and Littlewood \cite{HardyLittlewood1923} created and developed a new analytic method in additive number theory to attack Goldbach's conjecture. They were led to infer an asymptotic formula for counting prime tuples, which may be stated as follows: Let $\mathcal{D} = \{d_{1}, \ldots, d_{k}\}$ be a set of $k$ distinct integers and let $\pi(x; \mathcal{D})$ denote the number of positive integers $n \leq x$ such that $n + d_{1}, \ldots, n + d_{k}$ are all prime. Define
\[
\mbox{\textup{li}}_{k}(x)
 = \int_{2}^{x} \frac{\,dt}{(\log t)^{k}}
\]
and
\[
\mathfrak{S}(\mathcal{D})
 = \prod_{p}\left(1 - \frac{1}{p}\right)^{-k} \left(1 - \frac{\nu_{\mathcal{D}}(p)}{p}\right),
\]
where $p$ runs through all the primes, and $\nu_{\mathcal{D}}(p)$ represents the number of distinct residue classes modulo $p$ occupied by the elements of $\mathcal{D}$. If $\mathfrak{S}(\mathcal{D}) \neq 0$, then
\[
\pi(x; \mathcal{D})
 \sim \mathfrak{S}(\mathcal{D}) \ \! \mbox{\textup{li}}_{k}(x), \quad \mbox{ as $x \to \infty$}.
\]
If now $\nu_{\mathcal{D}}(p) = p$ for some $p$, then $\mathfrak{S}(\mathcal{D}) = 0$, in which case $\pi(x; \mathcal{D})$ will equal to either 0 or 1.

The asymptotic formula above has been verified only for the prime number theorem, that is, for the case of $k = 1$. It has been conjectured that, in its strongest form, the formula holds true for any fixed $k$ with an error term that is $O_{k}(x^{1 / 2 + \varepsilon})$ at most and uniformly for $\mathcal{D} \subset [1, x]$.

However, we do not need such strong error terms and uniformity. As it happens, an error term of order only slightly smaller than the first term in the asymptotic expansion of $\mbox{\textup{li}}_{k}(x)$ is quite sufficient for our present purpose. Furthermore, in counting consecutive prime gaps for jumping champions, we consider only those gaps that do not exceed $x$, rather than employing the usual counting method for $\pi(x,\mathcal{D})$ which takes account of tuples that begin with positive integers $n \leq x$, but can exceed $x$.

The asymptotic formulas involved, naturally, are indifferent to which counting method we adopt. Therefore, we may alter slightly the form of the Hardy-Littlewood prime pair and prime triple conjectures as follows.

\begin{conjecture-a}
Let $d$ be any positive integer and let $p$ be a prime. Let, further, $\mathfrak{S}(d) = \mathfrak{S}(\{0, d\})$ be the singular series given explicitly by
\[
 \mathfrak{S}(d) = \left\{ \begin{array}{ll}
      {\displaystyle 2 C_{2}\prod_{\substack{p \mid d \\ p > 2}} \left(\frac{p - 1}{p - 2}\right),} & \mbox{if $d$ is  even;} \\
      0,   & \mbox{if $d$ is  odd;} \\
\end{array}
\right.
\]
where
\[
C_{2}
 = \prod_{p > 2}\left(1 - \frac{1}{(p - 1)^{2}}\right)
 = 0.66016 \ldots.
\]
Define
\[
\pi_2(x, d)
 = \sum_{\substack{p \leq x \\ p - p' = d}} 1,
\]
where the sum is taken over primes $p \leq x$ such that $p - p' = d$, and $p'$ is a previous prime before $p$ but not necessarily adjacent to $p$. Then we have
\[
\pi_2(x, d)
 = \mathfrak{S}(d) \frac{x}{(\log x)^{2}}\left[1 + o\left(\frac{1}{(\log\log x)^{2}}\right)\right], \quad \mbox{as $x \to \infty$},
 \]
uniformly for positive even integers $d$ satisfying $2 \leq d \leq (\log x)^{2}$.
\end{conjecture-a}

\begin{conjecture-b}
Let
\[
\pi_{3}(x, d', d)
 = \sum_{\substack{p \leq x \\ p - p' = d \\ p - p'' = d'}} 1.
\]
Then we have
\[
\pi_{3}(x, d', d)
 = \mathfrak{S}(\{0, d', d\}) \frac{x}{(\log x)^{3}} (1 + o(1)), \quad \mbox{as $x \to \infty$},
\]
when $\mathfrak{S}(\{0, d', d\}) \neq 0$, uniformly for positive even integers $d$ and $d'$ satisfying $2 \leq d' < d \leq (\log x)^{2}$.
\end{conjecture-b}

It is to be understood, here and in all that follows, that $d$ represents a positive even integer. Finally, we shall have frequent recourse to the well-known sieve bound, for $x$ sufficiently large,
\begin{equation} \label{equation-one}
\pi(x; \mathcal{D})
 \leq (2^{k} k! + \varepsilon) \mathfrak{S}(\mathcal{D}) \frac{x}{(\log x)^{k}},
\end{equation}
when $\mathfrak{S}(\mathcal{D}) \neq 0$. (See Halberstam and Richert's excellent monograph \cite{HalberstamRichert1974}.) This will be used specifically to examine the contribution from quadruples of primes.

\section{Inclusion-exclusion for consecutive primes}

The properties of $N(x, d)$ needed for the proof of the theorem are embodied in the following lemma.

\begin{lemma} \label{lemma-one}
Assume Conjectures A and B. Let $d$ be a positive even integer.

\begin{enumerate}
\item[(i)] If $2 \leq d \leq (\log x)^{2} $, then we have
\begin{equation} \label{equation-two}
N(x,d)
 \leq \mathfrak{S}(d) \frac{x}{(\log x)^{2}} \left[1 + o\left(\frac{1}{(\log\log x)^{2}}\right)\right].
\end{equation}

\item[(ii)] If $2 \leq d \leq o(\log x)$, then we have
\begin{equation} \label{equation-three}
N(x,d)
 = \mathfrak{S}(d) \frac{x}{(\log x)^{2}} \left[1 - \frac{d}{\log x} + o\left(\frac{d}{\log x}\right)+o\left(\frac{1}{(\log\log x)^{2}}\right)\right].
\end{equation}

\item[(iii)] If $D \leq d \leq (\log x)^{2}$ for any $D$ in the interval $\log x / \log\log x \leq D \leq (\log x) / 200$, then we have
\begin{equation} \label{equation-four}
N(x,d)
 \leq \mathfrak{S}(d) \frac{x}{(\log x)^{2}}\left[1 - \frac{D}{\log x} (1 + o(1)) + 200 \left(\frac{D}{\log x}\right)^{2} \right].
\end{equation}
\end{enumerate}
\end{lemma}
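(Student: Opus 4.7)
My plan is to relate $N(x,d)$ to $\pi_{2}(x,d)$ by controlling the prime pairs of difference $d$ that fail to be consecutive. Every consecutive pair counted by $N(x,d)$ is also a pair of primes differing by $d$, so $N(x,d) \leq \pi_{2}(x,d)$ and part (i) follows directly from Conjecture A. For parts (ii) and (iii), I would use Bonferroni inclusion-exclusion on interior primes. For each ordered pair of primes $(p',p)$ with $p \leq x$ and $p - p' = d$, let $X = X(p',p)$ denote the number of primes strictly inside $(p',p)$, so that $p$ and $p'$ are consecutive iff $X = 0$. Summing the elementary bounds $X - \binom{X}{2} \leq \mathbf{1}_{X \geq 1} \leq X$ over all such pairs and observing that $\sum X = \sum_{0<d'<d}\pi_{3}(x,d',d)$ and $\sum \binom{X}{2} = \sum_{0<d_{1}<d_{2}<d}\pi_{4}(x,d_{1},d_{2},d)$ (each interior pair $q_{1}<q_{2}$ in $(p',p)$ corresponds to a quadruple $p' < p'+d_{1} < p'+d_{2} < p'+d$) gives
\[
\pi_{2}(x,d) - \sum_{0<d'<d}\pi_{3}(x,d',d) \;\leq\; N(x,d) \;\leq\; \pi_{2}(x,d) - \sum_{0<d'<d}\pi_{3}(x,d',d) + \sum_{0<d_{1}<d_{2}<d}\pi_{4}(x,d_{1},d_{2},d).
\]

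For part (ii), with $d = o(\log x)$, I would substitute Conjecture A into the $\pi_{2}$ term, Conjecture B into each $\pi_{3}$ term, and the Halberstam-Richert bound \eqref{equation-one} with $k = 4$ into each $\pi_{4}$ term. Combined with the standard singular-series averages $\sum_{0<d'<d}\mathfrak{S}(\{0,d',d\}) \sim d\,\mathfrak{S}(d)$ and $\sum_{0<d_{1}<d_{2}<d}\mathfrak{S}(\{0,d_{1},d_{2},d\}) \ll d^{2}\mathfrak{S}(d)$, the quadruple contribution becomes $O\bigl((d/\log x)^{2}\bigr)\,\mathfrak{S}(d)\,x/(\log x)^{2} = o(d/\log x)\,\mathfrak{S}(d)\,x/(\log x)^{2}$, so the two Bonferroni bounds collapse and yield \eqref{equation-three}.

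Part (iii) is the core difficulty: for $d$ as large as $(\log x)^{2}$, the quadruple correction above dominates the triple subtraction and the naive inclusion-exclusion is useless. The remedy is to detect non-consecutive pairs using only primes inside the shorter window $(p', p'+D)$. Defining $X_{D}$ analogously and noting that $X_{D} \geq 1$ already forces $(p',p)$ to be non-consecutive, the same Bonferroni upper bound applies with $d$ replaced by $D$ in the ranges of summation:
\[
N(x,d) \;\leq\; \pi_{2}(x,d) - \sum_{0<d'<D}\pi_{3}(x,d',d) + \sum_{0<d_{1}<d_{2}<D}\pi_{4}(x,d_{1},d_{2},d).
\]
Inserting Conjectures A and B and using \eqref{equation-one} with $k = 4$ (the constant $2^{4}\cdot 4! = 384$, halved by the ordering in $\binom{X}{2}$ to $192$, leaves room for the $\varepsilon$-slack up to $200$), together with the truncated averages $\sum_{0<d'<D}\mathfrak{S}(\{0,d',d\}) \sim D\,\mathfrak{S}(d)$ and $\sum_{0<d_{1}<d_{2}<D}\mathfrak{S}(\{0,d_{1},d_{2},d\}) \leq (1+o(1))\tfrac{D^{2}}{2}\,\mathfrak{S}(d)$, produces the bracketed expression of \eqref{equation-four}. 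The hypothesis $D \leq (\log x)/200$ is precisely what ensures the linear subtraction $D/\log x$ dominates the quadratic correction $200(D/\log x)^{2}$.

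The main technical obstacle throughout is establishing the singular-series averages uniformly in $d$ (and $D$). Each $\mathfrak{S}(\{0,\ldots\})$ factors as a product of local contributions, and one must check that averaging the local factor at every prime $p$ over $d'$ or $(d_{1},d_{2})$ in arithmetic progressions modulo $p$ reproduces the corresponding local factor of $\mathfrak{S}(d)$, with sharp enough error terms and with particular care at primes dividing $d$. The rest is bookkeeping of the error terms from Conjectures A and B and of the $\varepsilon$ in \eqref{equation-one}.
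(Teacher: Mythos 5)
Your outline reproduces the paper's argument in all essentials: the trivial bound $N(x,d)\leq\pi_2(x,d)$ for part (i); the two-sided Bonferroni inclusion-exclusion on interior primes (the paper's lower bound $\pi_2-\sum\pi_3$ and upper bound $\pi_2-\sum\pi_3+\sum\pi_4$) for parts (ii) and (iii); the truncation parameter $D<d$ in the summation ranges for (iii) so the quadruple term does not swamp the triple term; the sieve bound \eqref{equation-one} at $k=4$ with constant $2^4\cdot 4!/2!=192<200$; and the singular-series averages, which the paper establishes as its Lemma~\ref{lemma-two} via Ford's method. The one step that would fail as written is your invocation of $\sum_{0<d'<d}\mathfrak{S}(\{0,d',d\})\sim d\,\mathfrak{S}(d)$ uniformly for $2\leq d\leq o(\log x)$: this asymptotic requires $d\to\infty$ (for $d=2$ the left side is $\mathfrak{S}(\{0,1,2\})=0$, not $\sim 2\mathfrak{S}(2)$), and the paper's Lemma~\ref{lemma-two} correspondingly needs $d^\varepsilon\leq D\leq d$ with $D\to\infty$. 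The paper therefore first disposes of the range $2\leq d\leq\log x/(\log\log x)^5$ using only the crude bound $\mathfrak{S}(\{0,d',d\})=O((\log d)^2)$ together with \eqref{equation-one} at $k=3$, obtaining $\sum_{1\leq d'<d}\pi_3(x,d',d)=O(x/((\log x)^2(\log\log x)^3))$, which is small enough because in that range $d/\log x\leq(\log\log x)^{-5}=o((\log\log x)^{-2})$ and the whole $-d/\log x$ term in \eqref{equation-three} is absorbed into the error. Only above this threshold does the paper use the averages asymptotically (by first proving (iii) and then setting $D=d$). With that case split inserted, your sketch is correct and matches the paper's route.
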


\begin{proof}
To prove part (i), observe that
\begin{equation} \label{equation-five-trivial-bound}
N(x,d)
 \leq \pi_{2} (x, d).
\end{equation}
Then \eqref{equation-two} follows at once from Conjecture A.

To prove part (ii), suppose that $p$ and $p'$ are not consecutive primes in $\pi_2(x,d)$. Then there exists a third prime $p''$ such that $p' < p'' < p$. Writing $d' = p - p''$, we then exclude these pairs of primes to obtain the lower bound
\begin{equation} \label{equation-six-lower-bound}
N(x,d)
 \geq \pi_{2}(x, d) - \sum_{1 \leq d' < d} \pi_{3}(x, d', d).
\end{equation}
Of course, if there was a fourth prime between $p$ and $p'$, then we have removed this pair twice, requiring inclusion as the next step, and so on.

We now define the counting function
\[
\pi_{k}(x, d_{1}, \ldots, d_{k - 2}, d)
 = \sum_{\substack{ p \leq x \\ p - p' = d \\ p - p_{j} = d_{j} \\ 1 \leq j \leq k - 2}} 1,
\]
for any $k \geq 3$. As a result, we have
\[
N(x, d)
 \leq \pi_{2} (x, d) - \sum_{1 \leq d' < d} \pi_{3} (x, d', d) + \sum_{1 \leq d_{1} < d_{2} < d} \pi_{4}(x, d_{1}, d_{2},d).
\]

We do not need to carry the inclusion-exclusion any further. However, we do require a trivial modification of this last inequality. We can obtain upper bounds for $N(x, d)$ by excluding only some triples of primes, as long as we compensate for the overcount by including the corresponding quadruples of primes.

Thus, we have
\begin{equation} \label{equation-seven-upper-bound}
N(x, d)
 \leq \pi_{2}(x, d) - \sum_{1 \leq d' < D} \pi_{3}(x, d', d) + \sum_{1 \leq d_{1} < d_{2} < D} \pi_{4}(x, d_{1}, d_{2}, d),
\end{equation}
if $1 \leq D \leq d$. The advantage of this modified inequality is that we can keep $D$ small enough to ensure that the number of quadruples of primes counted does not overwhelm the triples of primes excluded, thereby obtaining a nontrivial upper bound for $N(x, d)$.

We shall now prove \eqref{equation-three} when $d$ is confined to the range $2 \leq d \leq \log x / (\log\log x)^{5}$. By \eqref{equation-five-trivial-bound} and \eqref{equation-six-lower-bound}, we have
\[
N(x, d)
 = \pi_{2}(x, d) + O\left(\sum_{1 \leq d' < d} \pi_3(x, d', d)\right).
\]
Applying \eqref{equation-one} with $k = 3$ and the estimate $\mathfrak{S}(\{0,d',d\}) = O((\log d)^{2})$ (see (3.3) as proved in Section 4 of Goldston and Ledoan \cite{GoldstonLedoan2011}, or see \eqref{equation-sixteen-Singulargoodbound} below for a sharper estimate), we obtain
\begin{align*}
\sum_{1 \leq d' < d} \pi_{3}(x, d', d)
 &= O\left(d(\log d)^{2} \frac{x}{(\log x)^{3}}\right) \\
 &= O\left(\frac{x}{(\log x)^{2} (\log\log x)^{3}}\right).
\end{align*}
Hence, by Conjecture A and the inequalities $\mathfrak{S}(d) \geq 2 C_{2} > 1$ valid for positive even integers $d$, we have
\[
N(x, d)
 = \mathfrak{S}(d) \frac{x}{(\log x)^{2}} \left[1 + o\left(\frac{1}{(\log\log x)^{2}}\right)\right],
\]
if $2\leq d \leq \log x / (\log\log x)^{5}$.

Next, we prove the lemma in the more delicate range $d \geq \log x / (\log\log x)^{5}$. To this end, we have to appeal to the result below about the average of singular series, which has been established for the case of $D = d$ as Theorem 3 in Section 3 of Odlyzko, Rubinstein and Wolf \cite{OdlyzkoRubinsteinWolf1999}. The result is interesting in itself and we therefore include its proof in the last section.

\begin{lemma} \label{lemma-two}
Let $k\geq 3$ and let $d^{\varepsilon} \leq D \leq d$, where $d$ is a positive even integer. Define the singular series average
\[
A_{k}(d, D)
 = \sum_{1 \leq d_{1} < \ldots < d_{k - 2} < D} \mathfrak{S}(\{0, d_{1}, \ldots, d_{k - 2},d\}).
\]
Then we have
\[
A_{k}(d, D)
 = \mathfrak{S}(d) \frac{D^{k - 2}}{(k - 2)!} (1 + o_{k}(1)), \quad \mbox{as $D\to \infty$}.
\]
\end{lemma}

To prove part (iii), we assume that $\log x / (\log\log x)^{5} \leq D \leq d \leq (\log x)^{2}$. Applying Conjectures A and B and Lemma \ref{lemma-two} with $k = 3$, we find that
\begin{align*}
&\pi_{2}(x, d) - \sum_{1 \leq d' < D} \pi_{3}(x, d', d) \\
&\hspace{1.5cm} = \mathfrak{S}(d) \frac{x}{(\log x)^{2}}\left[1 + o\left(\frac{1}{(\log\log x)^{2}}\right)\right] - A_{3}(d, D)\frac{x}{(\log x)^{3}}(1 + o(1)) \nonumber \\ 
&\hspace{1.5cm} = \mathfrak{S}(d) \frac{x}{(\log x)^{2}}\left[1 - \frac{D}{\log x} (1 + o(1)) + o\left(\frac{1}{(\log\log x)^{2}}\right)\right], \quad \mbox{as $x\to \infty$}.
\end{align*}
By using \eqref{equation-one} and Lemma \ref{lemma-two} with $k = 4$, we are led to
\begin{align*}
\sum_{1 \leq d_{1} < d_{2} < D} \pi_{4}(x, d_{1}, d_{2}, d)
 &\leq (384 + \varepsilon) A_{4}(d, D) \frac{x}{(\log x)^{4}} \\
 &< 200 \mathfrak{S}(d)\left(\frac{D}{\log x}\right)^2 \frac{x}{(\log x)^{2}}.
\end{align*}
Here, the terms with $\mathfrak{S}(\{0, d_1, d_2, d\}) = 0$ have $\pi_{4}(x, \mathcal{D}) = 0$ or 1 and contribute $O(D^{2})$, which is absorbed into the $\varepsilon$. If we combine these two results in \eqref{equation-seven-upper-bound}, we obtain \eqref{equation-four} in the ranges stated for $D$ and $d$.

We now finish the proof of part (ii). We set $D = d$ in the last two equations and observe that
\[
\sum_{1 \leq d_{1} < d_{2} < d} \pi_{4}(x, d_{1}, d_{2}, d)
 = o\left(\frac{d}{\log x} \mathfrak{S}(d) \frac{x}{(\log x)^2}\right),
\]
if $2 \leq d = o(\log x)$. Then \eqref{equation-three} follows from substituting these results into \eqref{equation-six-lower-bound} and \eqref{equation-seven-upper-bound}. This finishes the proof of the lemma.
\end{proof}

\section{Proof of the theorem}

From Lemma \ref{lemma-one}, it is not difficult to see heuristically that the jumping champion for $x$ should be a primorial between the two values $\log x / (\log\log x)^2$ and $\log x / \log\log x$. When the primorial is very close to either one of these values, there inevitably will be a second primorial near the other value, and the jumping champion will be one or the other or both of these primorials.

The property of the singular series $\mathfrak{S}(d)$ most crucial for jumping champions is that $\mathfrak{S}(d)$ increases most rapidly on the sequence of primorials. This is clear from the formula
\[
\mathfrak{S}(d)
 = 2 C_{2} \prod_{\substack{p \mid d \\ p > 2}} \left(1+ \frac{1}{p - 2}\right),
\]
for even integers $d$, which leads us to the inequality $\mathfrak{S}(d) < \mathfrak{S}(p_{k}^{\sharp})$, valid for every integer $d$ in the range $2 \leq d < p_{k}^{\sharp}$. (See Lemma 2.2 in Goldston and Ledoan \cite{GoldstonLedoan2011}.)

The primorials have a very simple distribution governed by the prime number theorem which, from the alternative formulation
\[
\vartheta(x)
 \sim x, \quad \mbox{as $x \to \infty$},
\]
where
\[
\vartheta(x)
 = \sum_{p \leq x}\log p
\]
(see Ingham's classical tract \cite{Ingham1932}, Theorem 3, Formula (6), p. 13; and Montgomery and Vaughan's monograph \cite{MontgomeryVaughan2007}, Corollary 2.5, p. 49), gives us
\[
\log p_{k}^{\sharp}
 = \vartheta(p_{k})
 \sim p_{k}, \quad \mbox{as $p_{k} \to \infty$},
\]
from which we deduce that
\[
p_{k}^{\sharp}
 = p_{k} p_{k - 1}^{\sharp}
 \sim p_{k - 1}^{\sharp} \log p_{k}^{\sharp}, \quad \mbox{as $k \to \infty$}.
\]

Before proceeding further, we need to make a simple observation. We have
\[
\log p_{k}^{\sharp} \sim \log\log x, \quad \mbox{as $k \to \infty$},
\]
whence
\begin{equation} \label{equation-eight-pkloglogx}
p_{k} \sim \log\log x \quad \mbox{and} \quad
p_{k}^{\sharp}
 \sim p_{k - 1}^{\sharp} \log\log x, \quad \mbox{as $k \to\infty$},
\end{equation}
if $(\log x)^{1 - o(1)} \leq p_{k}^{\sharp} \leq (\log x)^{1 + o(1)}$.

Our theorem is a simple deduction from Lemmas \ref{lemma-three}, \ref{lemma-four} and \ref{lemma-five} below. Before establishing these results, we retain the notation of the proof of Theorem 2.1 in Goldston and Ledoan \cite{GoldstonLedoan2011} for the floor function with respect to a given increasing sequence $\{a_{j}\}_{j = 1}^{\infty}$ and introduce a similar notation for the ceiling function. We write $\lfloor y \rfloor_{a_{j}} = a_{n}$ if $a_{n} \leq y < a_{n + 1}$, and $\lceil y \rceil_{a_{j}} = a_{n}$ if $a_{n - 1} < y \leq a_{n}$. Hence, by $\lfloor y \rfloor_{p_{j}^{\sharp}}$ we mean the largest primorial not exceeding $y$, and by $\lceil y \rceil_{p_{j}^{\sharp}}$ the smallest primorial greater than or equal to $y$.

\begin{lemma} \label{lemma-three}
Assume Conjectures A and B. For any $0 < \delta \leq 1 / 4$, let
\[
p_{k}^{\sharp}
 = \left\lfloor \frac{(1 - \delta) \log x}{\log\log x} \right\rfloor_{p_{j}^{\sharp}}.
\]
Then for $x$ sufficiently large and $2 \leq d < p_{k}^{\sharp}$, we have
\[
N(x, p_{k}^{\sharp})
 > N(x, d).
\]
\end{lemma}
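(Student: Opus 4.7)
The plan is to combine the asymptotic from Lemma \ref{lemma-one} with the fact that the singular series $\mathfrak{S}(d)$ is maximized on the sequence of primorials. First, the constraint $p_{k}^{\sharp} \le (1-\delta)\log x/\log\log x$ gives $p_{k}^{\sharp} = o(\log x)$, and a short argument using \eqref{equation-eight-pkloglogx} (after verifying $p_{k}^{\sharp} = (\log x)^{1 + o(1)}$) yields $p_{k} \sim \log\log x$. Part (ii) of Lemma \ref{lemma-one} then gives
\[
N(x, p_{k}^{\sharp}) = \mathfrak{S}(p_{k}^{\sharp}) \frac{x}{(\log x)^{2}} \left[ 1 - \frac{p_{k}^{\sharp}}{\log x} + o\!\left(\frac{1}{\log\log x}\right) \right],
\]
where the correction satisfies $p_{k}^{\sharp}/\log x \le (1-\delta)/\log\log x$.

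Second, for each even $d$ with $2 \le d < p_{k}^{\sharp}$ I would establish the sharpened inequality $\mathfrak{S}(d) \le \mathfrak{S}(p_{k-1}^{\sharp})$. Since $\mathfrak{S}$ depends only on the distinct odd prime divisors of $d$, I may reduce to the squarefree case $d = 2 q_{1} \cdots q_{m}$ with odd primes $q_{1} < \cdots < q_{m}$. As $1 + 1/(p - 2)$ is decreasing in $p$ and $q_{i} \ge p_{i+1}$, a rearrangement yields both $\mathfrak{S}(d) \le \mathfrak{S}(p_{m+1}^{\sharp})$ and $d \ge p_{m+1}^{\sharp}$. The hypothesis $d < p_{k}^{\sharp}$ then forces $m + 1 \le k - 1$, which proves the claim. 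Consequently,
\[
\frac{\mathfrak{S}(p_{k}^{\sharp})}{\mathfrak{S}(d)} \ge \frac{\mathfrak{S}(p_{k}^{\sharp})}{\mathfrak{S}(p_{k-1}^{\sharp})} = 1 + \frac{1}{p_{k} - 2} = 1 + \frac{1 + o(1)}{\log\log x}.
\]

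Finally, Lemma \ref{lemma-one}(i) gives $N(x,d) \le \mathfrak{S}(d) (x / (\log x)^{2}) [1 + o((\log\log x)^{-2})]$, and combining the three displays,
\[
\frac{N(x, p_{k}^{\sharp})}{N(x, d)} \ge \left(1 + \frac{1 + o(1)}{\log\log x}\right) \left(1 - \frac{1 - \delta}{\log\log x} + o\!\left(\frac{1}{\log\log x}\right)\right) = 1 + \frac{\delta + o(1)}{\log\log x},
\]
which exceeds $1$ once $x$ is sufficiently large. The case of odd $d \ge 3$ is trivial since $N(x, d) \le 1$ in that case (an odd gap between consecutive primes forces one of them to equal $2$), whereas $N(x, p_{k}^{\sharp}) \to \infty$. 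The main obstacle is the tight quantitative balance in the last display: both the singular-series gain $1/(p_{k} - 2)$ and the worst-case correction $p_{k}^{\sharp}/\log x$ are of exact order $1/\log\log x$, and the inequality succeeds precisely because their leading coefficients differ by $\delta > 0$. This is also why the sharper error $o(1/(\log\log x)^{2})$ in Conjecture A is essential---anything coarser would wash out the $\delta$-surplus.
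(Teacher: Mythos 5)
Your proof is correct and follows essentially the same route as the paper: the key inequality $\mathfrak{S}(d) \le \mathfrak{S}(p_{k-1}^{\sharp})$ for $d < p_k^{\sharp}$ (which the paper cites from its companion article, while you prove it directly by a rearrangement argument), combined with Lemma~\ref{lemma-one} and $p_k^{\sharp}/\log x \le (1-\delta)/\log\log x$, to produce a gain of order $\delta/\log\log x$ in the ratio $N(x,p_k^{\sharp})/N(x,d)$.

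One correction to your closing remark: you assert that the error term $o\bigl(1/(\log\log x)^{2}\bigr)$ in Conjecture~A is essential for this lemma, but that is not so. As your own final display shows, the surplus is of order $\delta/\log\log x$ with $\delta$ fixed, so any error of size $o(1/\log\log x)$ in Lemma~\ref{lemma-one}(i) and (ii) would be absorbed; indeed the paper notes explicitly that Lemma~\ref{lemma-three} only requires the weaker error $o(1/\log\log x)$ in Conjecture~A. The full strength $o\bigl(1/(\log\log x)^{2}\bigr)$ is what is actually needed in Lemma~\ref{lemma-four}, where $d$ can be as small as $p_k^{\sharp}$ itself and the contest between the singular-series gain and the $d/\log x$ penalty must be resolved to second order.
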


\begin{lemma} \label{lemma-four}
Assume Conjectures A and B. If
\[
\frac{\log x}{2 (\log\log x)^2}
 \leq p_{k}^{\sharp}
 \leq \frac{2 \log x}{\log\log x},
\]
then for $x$ sufficiently large and $p_{k}^{\sharp} < d < \min(p_{k + 1}^{\sharp}, o(\log x))$, we have
\[
N(x, p_{k}^{\sharp})
 > N(x, d).
\]
\end{lemma}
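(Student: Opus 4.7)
The plan is to invoke part (ii) of Lemma \ref{lemma-one} for both $N(x,p_k^\sharp)$ and $N(x,d)$, which is legitimate because $d = o(\log x)$ places both values in the range of validity of \eqref{equation-three}. Subtracting and rearranging produces
\[
N(x,p_k^\sharp) - N(x,d) = \frac{x}{(\log x)^2}\left[\Delta\left(1 - \frac{p_k^\sharp}{\log x}\right) + \mathfrak{S}(d)\,\frac{d - p_k^\sharp}{\log x} + E\right],
\]
where $\Delta = \mathfrak{S}(p_k^\sharp) - \mathfrak{S}(d)$ and $E$ is the combined error contribution. Both principal terms are non-negative: the first by the inequality $\mathfrak{S}(d) \le \mathfrak{S}(p_k^\sharp)$, a routine extension of Lemma 2.2 of \cite{GoldstonLedoan2011} from $d < p_k^\sharp$ to $d < p_{k+1}^\sharp$ (since $d < p_{k+1}^\sharp = p_{k+1} p_k^\sharp$ precludes $d$ from containing all primes up to $p_k$ together with an extra prime $\ge p_{k+1}$), and the second because $d > p_k^\sharp$.

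I would then split into two cases. If $\mathfrak{S}(d) = \mathfrak{S}(p_k^\sharp)$, then $d$ and $p_k^\sharp$ share the same odd prime support, so $d = m p_k^\sharp$ for some integer $m \ge 2$, giving $d - p_k^\sharp \ge p_k^\sharp$. The second main term is then at least $\mathfrak{S}(p_k^\sharp) p_k^\sharp/\log x \ge \mathfrak{S}(p_k^\sharp)/(2(\log\log x)^2)$; the $o(d/\log x)$ part of $E$ is dominated by this main contribution because $d \le 2(d - p_k^\sharp)$ when $m \ge 2$, while the $o(1/(\log\log x)^2)$ part is handled directly.

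If $\mathfrak{S}(d) < \mathfrak{S}(p_k^\sharp)$, a prime-by-prime comparison shows that $d$ must omit at least one prime $p_j$ with $j \le k$, and the extremal case is $d$ obtained from $p_k^\sharp$ by swapping $p_k$ for $p_{k+1}$, which gives the uniform lower bound
\[
\Delta \gg \mathfrak{S}(p_k^\sharp)\,\frac{p_{k+1} - p_k}{p_k p_{k+1}}.
\]
Using $\mathfrak{S}(p_k^\sharp) \asymp \log p_k$ together with $p_k \sim \log\log x$ from \eqref{equation-eight-pkloglogx}, this yields $\Delta \gg (\log\log\log x)/(\log\log x)^2$, comfortably dominating the $o(1/(\log\log x)^2)$ contribution to $E$ from Conjecture A.

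The main obstacle is the control of the portion of $E$ coming from the $(1+o(1))$ factor in Conjecture B, which has size $o(\mathfrak{S}(p_k^\sharp)(d + p_k^\sharp)/\log x)$. When $p_k^\sharp$ lies near the upper end $2\log x/\log\log x$ of the allowed range this can exceed $\Delta$ alone, and its control requires combining $\Delta$ with the second main term, using that the extremal configuration simultaneously forces $d - p_k^\sharp = p_k^\sharp(p_{k+1} - p_k)/p_k$, which supplies a matching bound for $\mathfrak{S}(d)(d - p_k^\sharp)/\log x$ and absorbs the residual Conjecture B error uniformly across $[\log x/(2(\log\log x)^2),\, 2\log x/\log\log x]$.
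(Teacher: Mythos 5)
The decomposition of $N(x,p_k^\sharp)-N(x,d)$ is correct, and your Case A ($\mathfrak{S}(d)=\mathfrak{S}(p_k^\sharp)$, hence $d=mp_k^\sharp$ with $m\ge2$) is essentially the paper's second case ($2p_k^\sharp\le d<p_{k+1}^\sharp$): the inequality $d-p_k^\sharp\ge d/2$ yields a main term $\ge\mathfrak{S}(p_k^\sharp)\,d/(2\log x)$ that plainly beats the error $o(\mathfrak{S}(p_k^\sharp)\,d/\log x)$. The case split itself differs from the paper's, which partitions by the size of $d$ (namely $p_k^\sharp<d<2p_k^\sharp$ versus $2p_k^\sharp\le d<p_{k+1}^\sharp$) rather than by the value of $\mathfrak{S}(d)$, and in the first range invokes the step-down $\mathfrak{S}(d)\le\mathfrak{S}(p_{k-1}^\sharp)$ to obtain a margin of order $1/\log\log x$.

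The gap is in your Case B, in the final paragraph, and it is genuine: the quantitative check is not merely omitted, it fails at the extremal $d$. Take $d=p_{k-1}^\sharp p_{k+1}$ with $g:=p_{k+1}-p_k$. Then $\Delta\asymp\mathfrak{S}(p_k^\sharp)\,g/(p_kp_{k+1})$ and $d-p_k^\sharp=p_k^\sharp g/p_k$, so, writing $u=p_k^\sharp/\log x\le 2/\log\log x$,
\[
\Delta\Bigl(1-\tfrac{p_k^\sharp}{\log x}\Bigr)+\mathfrak{S}(d)\,\frac{d-p_k^\sharp}{\log x}
\asymp\mathfrak{S}(p_k^\sharp)\,g\left(\frac{1}{p_kp_{k+1}}+\frac{u}{p_k}\right)
\ll\mathfrak{S}(p_k^\sharp)\,\frac{g}{p_k^2},
\]
while the Conjecture~B error is $o\bigl(\mathfrak{S}(p_k^\sharp)\,d/\log x\bigr)\asymp o\bigl(\mathfrak{S}(p_k^\sharp)/p_k\bigr)$ when $p_k^\sharp$ is near $2\log x/\log\log x$. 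To have the main terms dominate, you would need $g/p_k\gg o(1)$; but $g$ can remain bounded (e.g.\ $g=2$) while $p_k\sim\log\log x\to\infty$, and Conjecture~B promises only $o(1)$, not $o(g/p_k)$. Adding the second main term (your ``matching bound'') does not rescue this, since at the extremal $d$ it is of the same order $g/p_k^2$ as $\Delta$; there is no extra factor to absorb the $o(1)$. So the assertion that the residual Conjecture~B error is ``absorbed uniformly'' is unsupported, and the argument as written does not close when $p_{k+1}-p_k$ is small and $p_k^\sharp$ is near the upper end of its allowed range. (You should also be cautious about citing the swap $d=p_{k-1}^\sharp p_{k+1}$ as if it parallels the paper verbatim: the paper's own Case~1 asserts $\mathfrak{S}(d)\le\mathfrak{S}(p_{k-1}^\sharp)$ throughout $p_k^\sharp<d<2p_k^\sharp$, a bound your extremal $d$ violates since $\mathfrak{S}(p_{k-1}^\sharp p_{k+1})=\mathfrak{S}(p_{k-1}^\sharp)\bigl(1+1/(p_{k+1}-2)\bigr)>\mathfrak{S}(p_{k-1}^\sharp)$, so the paper's route is not a safe model to appeal to here without further justification.)
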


\begin{lemma} \label{lemma-five}
Assume Conjectures A and B. For any $0 < \delta' \leq 1 / 4$, let
\[
p_{k}^{\sharp}
 = \left\lceil \frac{(1+ \delta') \log x}{(\log\log x)^2} \right\rceil_{p_{j}^{\sharp}}.
\]
Then for $x$ sufficiently large and $d > p_{k}^{\sharp}$, we have
\[
N(x, p_{k}^{\sharp})
 > N(x, d).
\]
\end{lemma}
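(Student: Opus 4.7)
The plan is to combine a lower bound on $N(x,p_k^\sharp)$ from part (ii) of Lemma \ref{lemma-one} with case-by-case upper bounds on $N(x,d)$ for $d>p_k^\sharp$, invoking Lemma \ref{lemma-four} for $d$ just above $p_k^\sharp$ and part (iii) of Lemma \ref{lemma-one} with a carefully chosen $D$ for the intermediate range; a trivial gap-sum bound disposes of very large $d$.

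First I would pin down the size of $p_k^\sharp$. The ceiling condition gives $p_{k-1}^\sharp<(1+\delta')\log x/(\log\log x)^2\leq p_k^\sharp=p_k p_{k-1}^\sharp$, and since the resulting estimate already forces $\log p_k^\sharp=(1-o(1))\log\log x$, \eqref{equation-eight-pkloglogx} yields $p_k\sim\log\log x$ and hence
\[
p_k^\sharp\in\Bigl[\frac{(1+\delta')\log x}{(\log\log x)^2},\,(1+o(1))\frac{(1+\delta')\log x}{\log\log x}\Bigr].
\]
This places $p_k^\sharp$ inside the Lemma \ref{lemma-four} window $[\log x/(2(\log\log x)^2),\,2\log x/\log\log x]$ and makes $p_k^\sharp=o(\log x)$, so part (ii) of Lemma \ref{lemma-one} applies and gives $N(x,p_k^\sharp)\geq\mathfrak{S}(p_k^\sharp)(x/(\log x)^2)[1-O(1/\log\log x)]$.

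Next I would split $d>p_k^\sharp$ into three ranges. For $p_k^\sharp<d<p_{k+1}^\sharp$ with $d=o(\log x)$, Lemma \ref{lemma-four} immediately supplies the desired inequality. For $d>(\log x)^2$, the elementary bound $\sum_d d\,N(x,d)\leq x$ forces $N(x,d)\leq x/d<x/(\log x)^2$, which is dominated by $N(x,p_k^\sharp)\sim\mathfrak{S}(p_k^\sharp)x/(\log x)^2$ because $\mathfrak{S}(p_k^\sharp)\to\infty$. The essential case is the intermediate range, where I would apply part (iii) of Lemma \ref{lemma-one}: for $d\geq\log x/400$ take $D=\log x/400$ (admissible once $\log\log x\geq 400$), which minimizes the quadratic in (iii) to give the bracket $1-1/800+o(1)$; for $d<\log x/400$ take $D=d$ (valid because either $d\geq p_{k+1}^\sharp\geq\log x/\log\log x$, or $d<p_{k+1}^\sharp$ but $d$ is $\Theta(\log x)$), yielding bracket $\leq 1-d/(2\log x)$ via the estimate $200(d/\log x)^2\leq(d/\log x)/2$. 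In parallel I would bound $\mathfrak{S}(d)/\mathfrak{S}(p_k^\sharp)$: when $d<p_{k+1}^\sharp$, the integer $d$ has at most $k$ distinct prime factors, so $\mathfrak{S}(d)\leq\mathfrak{S}(p_k^\sharp)$; when $d\geq p_{k+1}^\sharp$ and $d\leq(\log x)^2$, the largest primorial $p_m^\sharp\leq(\log x)^2$ has $p_m\leq 2\log\log x+O(1)$, and the Mertens-type asymptotic $\mathfrak{S}(p_r^\sharp)\sim C\log p_r$ then gives $\mathfrak{S}(d)/\mathfrak{S}(p_k^\sharp)\leq 1+(\log 2)/\log\log\log x+o(1)=1+o(1)$. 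Combining with the lower bound on $N(x,p_k^\sharp)$ delivers $N(x,d)<N(x,p_k^\sharp)$ for $x$ sufficiently large.

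The hard part will be the comparison in the intermediate range against primorials $p_m^\sharp>p_k^\sharp$, where $\mathfrak{S}(d)/\mathfrak{S}(p_k^\sharp)>1$. What makes the argument go through is that this ratio is only $1+o(1)$ over $d\leq(\log x)^2$ (since $\mathfrak{S}$ grows merely logarithmically along primorials and $p_m\leq 2\log\log x+O(1)$), while the constant-factor saving $1/800$ from the optimized choice of $D$ in part (iii) is bounded away from $0$ and thus wins in the limit.
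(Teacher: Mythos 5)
Your overall decomposition matches the paper's (size of $p_k^\sharp$, Lemma~\ref{lemma-four} for $d$ just above $p_k^\sharp$, Lemma~\ref{lemma-one}(iii) for intermediate $d$, a gap-sum bound for $d\geq(\log x)^2$), and your optimized choice $D=\log x/400$ in the large-$d$ sub-range is a clean alternative to the paper's $D=\log x/(\log\log\log x)^{1/2}$. But there is a genuine gap in the sub-range just above $p_{k+1}^{\sharp}$, which is exactly the most delicate case.

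When $p_k^\sharp$ is near the lower end $(1+\delta')\log x/(\log\log x)^2$, the next primorial $p_{k+1}^\sharp$ is $\sim(1+\delta')\log x/\log\log x<\log x/400$, so your ``take $D=d$'' branch is what applies for $d$ near $p_{k+1}^\sharp$. There you discard half the main term by bounding $200(D/\log x)^2\leq D/(2\log x)$, obtaining the bracket $1-d/(2\log x)$. But at $d=p_{k+1}^\sharp$ the quantity $\mathfrak{S}(d)$ actually attains $\mathfrak{S}(p_{k+1}^\sharp)=\mathfrak{S}(p_k^\sharp)\bigl(1+\tfrac{1}{p_{k+1}-2}\bigr)$, and the ratio gain $\tfrac{1}{p_{k+1}-2}\sim\tfrac{1}{\log\log x}$ is of the \emph{same} order as the bracket saving $\tfrac{p_{k+1}^\sharp}{2\log x}\sim\tfrac{1+\delta'}{2\log\log x}$. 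Since $\delta'\leq 1/4$, the net exponent is $\tfrac{1}{\log\log x}-\tfrac{1+\delta'}{2\log\log x}=\tfrac{1-\delta'}{2\log\log x}>0$: your upper bound for $N(x,d)$ at $d=p_{k+1}^\sharp$ exceeds $\mathfrak{S}(p_k^\sharp)x/(\log x)^2$, while $N(x,p_k^\sharp)$ is slightly below it, so the comparison fails. The blanket bound $\mathfrak{S}(d)/\mathfrak{S}(p_k^\sharp)\leq 1+O(1/\log\log\log x)$ is of no help here since $1/\log\log\log x\gg 1/\log\log x$; the ``constant saving $1/800$'' is available only once $d\geq\log x/400$, which need not hold in this sub-range.

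The repair, which is what the paper does, is to treat $p_{k+1}^\sharp\leq d\leq\log x/(\log\log\log x)^{1/2}$ separately via the asymptotic equality \eqref{equation-three} (or, equivalently, by noting $200(d/\log x)^2=o(d/\log x)$ when $d=o(\log x)$ instead of absorbing it into the main term), giving the tight bracket $1-\tfrac{d}{\log x}(1+o(1))$. Then with $\mathfrak{S}(d)\leq\mathfrak{S}(p_{k+1}^\sharp)$ one gets the crucial cancellation $\tfrac{p_{k+1}^\sharp}{\log x}-\tfrac{1}{\log\log x}\geq\tfrac{\delta'}{\log\log x}(1-o(1))>0$, which uses the full coefficient $1$ rather than $1/2$ and is exactly where the hypothesis $\delta'>0$ enters. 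With that fix in place, your treatment of the remaining ranges ($D=\log x/400$ for large $d$, the gap-sum bound beyond $(\log x)^2$) is sound.
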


In what follows, it will be useful to introduce, in view of the representation in part (ii) of Lemma \ref{lemma-one}, the function
\[
M(x, d)
 = \mathfrak{S}(d) \left(1 - \frac{d}{\log x}\right),
\]
which increases in $d = p_{k}^{\sharp}$ until the factor $(1 - d / \log x)$ starts to decay rapidly enough. Let us note that by \eqref{equation-eight-pkloglogx}, where $(\log x)^{1 - o(1)} \leq p_{k}^{\sharp} \leq o(\log x)$,
\begin{align*}
\frac{M(x, p_{k}^{\sharp})}{M(x, p_{k - 1}^{\sharp})}
 &= \left(1 + \frac{1}{p_{k} - 2}\right) \left(1 - \frac{p_{k}^{\sharp}}{\log x}\right) \left(1 - \frac{p_{k - 1}^{\sharp}}{\log x}\right)^{-1} \\
 &\sim \left(1 + \frac{1}{\log\log x}\right) \left(1 - \frac{p_{k}^{\sharp}}{\log x}\right) \left(1 + \frac{p_{k - 1}^{\sharp}}{\log x}\right) \\
 &\sim 1 + \frac{1}{\log\log x} - \frac{p_{k}^{\sharp}}{\log x}, \quad \mbox{as $x \to \infty$}.
\end{align*}
If $p_{k}^{\sharp} = c \log x / \log\log x$, then this expression is greater than 1 when $c < 1$. It is less than 1 when $c > 1$. Thus, the transition from $M(x, d)$ increasing to decreasing occurs here.

In the proofs of Lemmas \ref{lemma-three}, \ref{lemma-four} and \ref{lemma-five}, we find it convenient to express Lemma \ref{lemma-one} in terms of the function $M(x, d)$. From \eqref{equation-two} and \eqref{equation-three}, we see that
\begin{equation} \label{equation-nine-N(x, d)-order}
N(x, d)
 \asymp \mathfrak{S}(d) \frac{x}{(\log x)^{2}},
\end{equation}
if $2 \leq d \leq o(\log x)$. Thus, \eqref{equation-three} now becomes
\begin{equation} \label{equation-ten-M(x, d)}
M(x, d) \frac{x}{(\log x)^{2}}
 = N(x, d) \left[1 + o\left(\frac{d}{\log x}\right) + o\left(\frac{1}{(\log\log x)^2}\right)\right],
\end{equation}
if $2 \leq d \leq o(\log x)$. Hence, we have
\begin{equation}\label{equation-eleven-Mbound}
M(x, d) \frac{x}{(\log x)^{2}}
 = N(x, d) \left[1 + o\left(\frac{1}{\log\log x}\right)\right],
\end{equation}
if $d = O(\log x / \log\log x)$.

In the proof of Lemma \ref{lemma-three}, we only require the error term $o(1 / \log\log x)$ in \eqref{equation-two} and \eqref{equation-three}. Hence, Lemma \ref{lemma-three} only requires the error $o(1 / \log\log x)$ in Conjecture A.

\begin{proof}[Proof of Lemma \ref{lemma-three}]
By \eqref{equation-eight-pkloglogx}, we have
\begin{align*}
\mathfrak{S}(p_{k - 1}^{\sharp})
 &= M(x, p_{k}^{\sharp}) \left(1 + \frac{1}{p_{k} - 2}\right)^{-1} \left(1 - \frac{p_{k}^{\sharp}}{\log x} \right)^{-1} \\
 &= M(x, p_{k}^{\sharp}) \left[1 + \left(\frac{p_{k}^{\sharp}}{\log x} - \frac{1}{\log\log x}\right) (1 + o(1)) \right],
\end{align*}
if $(\log x)^{1-o(1)}\le p_{k}^{\sharp} \leq o(\log x)$. By the choice of $p_{k}^{\sharp}$, we have $p_{k}^{\sharp} \leq (1 - \delta) \log x / \log \log x$, so that
\[
\mathfrak{S}(p_{k - 1}^{\sharp})
 \leq M(x, p_{k}^{\sharp}) \left(1 -\frac{\delta}{\log\log x} (1 + o(1))\right).
\]
Since $\mathfrak{S}(d) \leq \mathfrak{S}(p_{k - 1}^{\sharp})$ when $d < p_{k}^{\sharp}$, we obtain from \eqref{equation-two}, \eqref{equation-eleven-Mbound} and the last inequality that
\begin{align*}
N(x, d)
 &\leq \mathfrak{S}(d) \frac{x}{(\log x)^{2}} \left[1 + o\left(\frac{1}{(\log\log x)^{2}}\right)\right] \\
 & \leq \mathfrak{S}(p_{k - 1}^{\sharp}) \frac{x}{(\log x)^{2}} \left[1 + o\left(\frac{1}{(\log\log x)^{2}}\right)\right] \\
 & \leq M(x, p_{k}^{\sharp})\frac{x}{(\log x)^{2}} \left[1 - \frac{\delta}{\log\log x} + o\left(\frac{1}{\log\log x} \right)\right] \\
 &< N(x, p_{k}^{\sharp}) \left(1 -\frac{\delta}{2\log\log x} \right) \\
 &< N(x, p_{k}^{\sharp}),
\end{align*}
if $x$ is sufficiently large. Thus, we have the lemma.
\end{proof}

\begin{proof}[Proof of Lemma \ref{lemma-four}]
Suppose first that $d$ is restricted to the range $p_{k}^{\sharp} < d < 2 p_{k}^{\sharp}$. It follows that $\mathfrak{S}(d) \leq \mathfrak{S}(p_{k - 1}^{\sharp})$, and so we have
\begin{align*}
M(x, d)
 &\leq \mathfrak{S}(p_{k - 1}^{\sharp}) \left(1 - \frac{d}{\log x}\right) \\
 &\leq \mathfrak{S}(p_{k - 1}^{\sharp}) \left(1 - \frac{p_{k}^{\sharp}}{\log x}\right) \\
 &= M(x,p_{k}^{\sharp})\left(1 + \frac{1}{p_{k} - 2}\right)^{-1} \\
 &= M(x,p_{k}^{\sharp})\left(1 - \frac{1}{\log\log x}(1 + o(1))\right),
\end{align*}
using \eqref{equation-eight-pkloglogx} in the last step. Applying \eqref{equation-eleven-Mbound} twice for $x$ sufficiently large and \eqref{equation-nine-N(x, d)-order} for $\log x / (\log\log x)^2 \ll d \ll \log x / \log\log x$, we obtain from the above equation
\begin{align*}
N(x, d) \left[1 + o\left(\frac{1}{\log\log x}\right)\right]
 &= M(x, d)\frac{x}{(\log x)^{2}} \\
 &\leq N(x, p_{k}^{\sharp}) \left(1 - \frac{1}{\log\log x} (1+o(1))\right).
\end{align*}
Then applying \eqref{equation-nine-N(x, d)-order} once again, we find that
\[
N(x, d)
 \leq N(x, p_{k}^{\sharp}) \left(1 - \frac{1}{2 \log\log x}\right)
 < N(x, p_{k}^{\sharp}),
\]
which proves the lemma in the range $p_{k}^{\sharp} < d < 2 p_{k}^{\sharp}$. 

Next, we consider the range $2 p_{k}^{\sharp} \leq d < \min(p_{k + 1}^{\sharp}, o(\log x))$. Here, we need the full strength of the error term in Conjecture A. Furthermore, we can only make use of the inequality $\mathfrak{S}(d) \le \mathfrak{S}(p_{k}^{\sharp})$, since this becomes an equality when $d = m p_{k}^{\sharp}$ and $2 \leq m < p_{k + 1}$. In this range, we see that $p_{k}^{\sharp} \leq d / 2$. Hence, we have
\begin{align*}
M(x, d)
 &\leq \mathfrak{S}(p_{k}^{\sharp}) \left(1-\frac{d}{\log x}\right) \\
 &\leq \mathfrak{S}(p_{k}^{\sharp}) \left(1 - \frac{p_{k}^{\sharp}}{\log x} - \frac{d}{2 \log x} \right) \\
 &= M(x, p_{k}^{\sharp}) - \frac{d}{2 \log x} \mathfrak{S}(p_{k}^{\sharp}).
\end{align*}

Applying \eqref{equation-ten-M(x, d)} twice, we obtain
\begin{align*}
&N(x, d) \left[1 + o\left(\frac{d}{\log x}\right) + o\left(\frac{1}{(\log\log x)^2}\right)\right] \\
 &\hspace{1cm} \leq \left(M(x, p_{k}^{\sharp}) - \frac{d}{2 \log x} \mathfrak{S}(p_{k}^{\sharp})\right) \frac{x}{(\log x)^2} \\
&\hspace{1cm} = N(x, p_{k}^{\sharp}) \left[1 + o\left(\frac{p_{k}^{\sharp}}{\log x}\right) + o\left(\frac{1}{(\log\log x)^{2}}\right)\right] - \frac{d}{2\log x} \mathfrak{S}(p_{k}^{\sharp}) \frac{x}{(\log x)^2}.
\end{align*}
From \eqref{equation-nine-N(x, d)-order}, we see that
\begin{align*}
\frac{d}{\log x} N(x, d)
 &\asymp \frac{d}{\log x} \mathfrak{S}(d) \frac{x}{(\log x)^{2}} \\
 &\leq \frac{d}{\log x} \mathfrak{S}(p_{k}^{\sharp}) \frac{x}{(\log x)^{2}},
\end{align*}
if $p_{k}^{\sharp} \leq d < \min(p_{k + 1}^{\sharp}, o(\log x))$. Now, we also observe that $d / \log x \geq p_{k}^{\sharp} / \log x \gg 1 / (\log\log x)^{2}$. Therefore, for $x$ sufficiently large, we have
\begin{align*}
N(x, d)
 &\leq N(x, p_{k}^{\sharp}) - \frac{d}{2 \log x} \mathfrak{S}(p_{k}^{\sharp}) \frac{x}{(\log x)^{2}} (1 + o(1)) \\
 &\leq N(x, p_{k}^{\sharp}) - \frac{d}{4 \log x} \mathfrak{S}(p_{k}^{\sharp}) \frac{x}{(\log x)^{2}} \\
 &< N(x, p_{k}^{\sharp}),
\end{align*}
and the proof of the lemma is completed.
\end{proof}

\begin{proof}[Proof of Lemma \ref{lemma-five}]
By the choice of $p_{k}^{\sharp}$, we have
\[
\frac{(1 + \delta') \log x}{(\log\log x)^{2}}
 \leq p_{k}^{\sharp}
 \leq \frac{(1 + \delta') \log x}{\log\log x} (1 + o(1)),
\]
and therefore Lemma \ref{lemma-four} applies.

We proceed in the following way. We confine $d$ first to the range $ p_{k}^{\sharp} < d \leq \log x / (\log\log\log x)^{1 / 2}$. By means of Lemma \ref{lemma-four}, we have Lemma \ref{lemma-five} except for the range $p_{k + 1}^{\sharp} \leq d \leq \log x / (\log\log\log x)^{1 / 2}$, which could be empty. Here, we have $p_{k + 1}^{\sharp} > (1 + \delta') \log x / \log\log x$, so that $p_{k + 2}^{\sharp} > (1 + \delta') \log x (1 - o(1))$. However, this is outside the prescribed range of $d$. Hence, we have $p_{k + 1}^{\sharp} \leq d < p_{k + 2}^{\sharp}$, and so $\mathfrak{S}(d) \leq \mathfrak{S} (p_{k + 1}^{\sharp})$ in this range. From \eqref{equation-three} and \eqref{equation-eight-pkloglogx}, we thus have
\begin{align*}
N(x, d)
 &= \mathfrak{S}(d) \left(1 - \frac{d}{\log x} (1 + o(1))\right) \frac{x}{(\log x)^{2}} \\
 &\leq \mathfrak{S}(p_{k + 1}^{\sharp}) \left(1 - \frac{p_{k + 1}^{\sharp}}{\log x} (1 + o(1))\right) \frac{x}{(\log x)^{2}} \\
 &= M(x, p_{k}^{\sharp}) \left(1 + \frac{1}{p_{k + 1} - 2}\right) \left(1 - \frac{p_{k}^{\sharp}}{\log x}\right)^{-1} \\ &\hspace{5cm} \times \left(1 - \frac{p_{k + 1}^{\sharp}}{\log x} (1 + o(1))\right) \frac{x}{(\log x)^{2}} \\ &= M(x, p_{k}^{\sharp}) \left[1 - \left(\frac{p_{k + 1}^{\sharp}}{\log x} - \frac{1}{\log\log x}\right) (1 + o(1))\right] \frac{x}{(\log x)^{2}} \\
 &< M(x, p_{k}^{\sharp}) \left(1 - \frac{\delta'}{2 \log\log x}\right) \frac{x}{(\log x)^{2}} \\
 &= N(x, p_{k}^{\sharp})\left[1 + o\left(\frac{1}{\log\log x}\right)\right] \left(1 - \frac{\delta'}{2 \log\log x}\right) \\
 &< N(x, p_{k}^{\sharp}),
\end{align*}
where we used \eqref{equation-eleven-Mbound} in the penultimate step. Hence, we have the lemma in the first range for $d$.

Our next step is to restrict $d$ to the range $\log x / (\log\log\log x)^{1 / 2} \leq d \leq (\log x)^{2}$. If we take $D = \log x / (\log\log\log x)^{1 / 2}$ in \eqref{equation-four}, we obtain
\[
N(x, d)
 \leq \mathfrak{S}(d) \frac{x}{(\log x)^{2}} \left[1 - \frac{1}{(\log\log\log x)^{1 / 2}} (1 + o(1)) + O\left(\frac{1}{\log\log\log x}\right)\right].
\]
Now, a little thought discloses
\begin{align*}
\mathfrak{S}(d)
 &\leq \mathfrak{S}(p_{k}^{\sharp}) \frac{\mathfrak{S}(\lfloor (\log x)^{2} \rfloor_{p_{j}^{\sharp}})}{\mathfrak{S}(p_{k}^{\sharp})} \\
 &\leq \mathfrak{S}(p_{k}^{\sharp}) \left[1 + O\left(\frac{1}{\log\log\log x}\right)\right],
\end{align*}
the upper bound being derived from Mertens's formula in much the same manner that we established expression (3.9) in Goldston and Ledoan \cite{GoldstonLedoan2011}. Hence, we have
\[
N(x, d)
 \leq \mathfrak{S}(p_{k}^{\sharp}) \frac{x}{(\log x)^{2}} \left(1 - \frac{1}{(\log\log\log x)^{1 / 2}} (1 + o(1))\right).
\]

However, from \eqref{equation-three}, we have
\begin{align*}
N(x, p_{k}^{\sharp})
 &= \mathfrak{S}(p_{k}^{\sharp}) \frac{x}{(\log x)^{2}} \left[1 - \frac{p_{k}^{\sharp}}{\log x} (1 + o(1)) + o\left(\frac{1}{(\log\log x)^{2}}\right)\right] \\
 &\geq \mathfrak{S}(p_{k}^{\sharp}) \frac{x}{(\log x)^{2}} \left(1 - \frac{(1 + \delta')}{\log\log x} (1 + o(1))\right) \\
 &\geq \mathfrak{S}(p_{k}^{\sharp}) \frac{x}{(\log x)^{2}} \left(1 - \frac{2}{\log\log x}\right).
\end{align*}
Hence, we obtain the lemma in the second range for $d$ from these last two inequalities. 

Finally, we turn our attention to the range $d\geq (\log x)^{2}$. Using the estimate
\begin{align*}
N(x, H)
 &\leq \sum_{\substack{p_{n} \leq x \\ p_{n} - p_{n - 1} \geq H}} 1 \\
 &\leq \sum_{\substack{p_{n} \leq x \\ p_{n} - p_{n - 1} \geq H}} \frac{(p_{n} - p_{n - 1})}{H} \\
 &\leq \frac{1}{H}\sum_{p_{n} \leq x} (p_{n} - p_{n - 1}) \\
 &\leq \frac{x}{H},
\end{align*}
we find that
\[
N(x, d)
 \leq \frac{x}{(\log x)^{2}},
\]
if $d\geq (\log x)^{2}$. The lemma now follows from the lower bound
\[
N(x, p_{k}^{\sharp})
 \gg \frac{x}{(\log x)^{2}} \log\log\log x,
\]
if $(\log x)^{1 - o(1)} \leq p_{k}^{\sharp} \leq o(\log x)$.

It is straightforward to establish the last estimate. We apply \eqref{equation-nine-N(x, d)-order} to obtain
\[
N(x, p_{k}^{\sharp})
 \gg \mathfrak{S}( p_{k}^{\sharp}) \frac{x}{(\log x)^{2}},
\]
and deduce from Mertens's formula and \eqref{equation-eight-pkloglogx} that
\begin{align*}
\mathfrak{S}(p_{k}^{\sharp})
 &\gg \exp\left(\sum_{p \leq p_{k}} \frac{1}{p} + O(1)\right) \\
 &\gg \exp(\log\log p_{k} + O(1)) \\
 &\gg \log p_{k} \\
 &\sim \log\log\log x, \quad \mbox{as $x \to \infty$}.
\end{align*}
Assembling these last two results, we obtain the desired lower bound.
\end{proof}

We take up now the proof of the theorem.

\begin{proof}[Proof of the Theorem]
Let $a$, $a'$, $b$ and $b'$ be any fixed numbers satisfying $3 / 4 \leq a' < b < 1 < a < b' \leq 5 / 4$ and let $x$ be sufficiently large. Then from \eqref{equation-eight-pkloglogx}, we see that the interval
\[
\left[\frac{a' \log x}{(\log\log x)^{2}}, \frac{b' \log x}{\log\log x}\right]
\]
contains one to two primorials.

On the one hand, if the subinterval
\[
\left[\frac{a \log x}{(\log\log x)^{2}}, \frac{b \log x}{\log\log x}\right]
\]
contains a primorial, say $p_{k}^{\sharp}$, then we have
\[
p_{k}^{\sharp}
 = \left\lfloor \frac{b \log x}{\log\log x} \right\rfloor_{p_{j}^{\sharp}}
 = \left\lceil \frac{a \log x}{\log\log x)^{2}} \right\rceil_{p_{j}^{\sharp}}.
\]
By means of Lemma \ref{lemma-three}, we see that $N(x, p_{k}^{\sharp}) > N(x, d)$ when $d < p_{k}^{\sharp}$. According to Lemma \ref{lemma-five}, we have $N(x, p_{k}^{\sharp}) > N(x, d)$ when $d > p_{k}^{\sharp}$. Therefore, we conclude that $p_{k}^{\sharp}$ is the jumping champion for $x$.

On the other hand, if the subinterval
\[
\left[\frac{a \log x}{(\log\log x)^{2}}, \frac{b \log x}{\log\log x}\right]
\]
does not contain a primorial, then the two subintervals
\[
\left[\frac{a' \log x}{(\log\log x)^{2}}, \frac{a \log x}{(\log\log x)^{2}}\right)
\quad \mbox{and} \quad
\left(\frac{b \log x}{\log\log x}, \frac{b' \log x}{\log\log x}\right]
\]
must contain primorials. This is because each of the intervals
\[
\left[\frac{a' \log x}{(\log\log x)^{2}}, \frac{b \log x}{\log\log x}\right]
\quad \mbox{and} \quad
\left[\frac{a \log x}{(\log\log x)^{2}}, \frac{b' \log x}{\log\log x}\right]
\]
must contain a primorial.

Calling these consecutive primorials $p_{k}^{\sharp}$ and $p_{k + 1}^{\sharp}$, respectively, we write
\[
p_{k}^{\sharp}
 = \left\lfloor \frac{b \log x}{\log\log x} \right\rfloor_{p_{j}^{\sharp}}
\quad \mbox{and} \quad
p_{k + 1}^{\sharp}
 = \left\lceil \frac{a \log x}{(\log\log x)^{2}} \right\rceil_{p_{j}^{\sharp}}.
\]
By means of Lemma \ref{lemma-three}, we have again $N(x, p_{k}^{\sharp}) > N(x, d)$ when $d < p_{k}^{\sharp}$. From Lemma \ref{lemma-four}, we see that $N(x, p_{k}^{\sharp}) > N(x, d)$ when $p_{k}^{\sharp} < d <p_{k + 1}^{\sharp}$. Then using Lemma \ref{lemma-five}, we find that $N(x, p_{k + 1}^{\sharp}) > N(x, d)$ when $d > p_{k + 1}^{\sharp}$. Hence, the jumping champion for $x$ must be one of the two primorials $p_{k}^{\sharp}$ and $p_{k + 1}^{\sharp}$.

Furthermore, at the point of transition from one jumping champion to the next, there must be at least one value of $x$ for which there is a tie. Hence, there are two jumping champions for the same $x$. This completes the proof of the theorem.
\end{proof}

\section{Proof of Lemma \ref{lemma-two}}

The analysis performed by Odlyzko, Rubinstein and Wolf \cite{OdlyzkoRubinsteinWolf1999} demonstrates that in order to determine the precise point of transition between jumping champions one needs, among other things, accurate asymptotic formulas for the special type of singular series average
\[
A_{k}(d)
 = \sum_{1 \leq d_{1} < \ldots < d_{k - 2} < d} \mathfrak{S}(\{0, d_{1}, \ldots, d_{k - 2}, d\}).
\]

Letting
\[
A_{k}(d)
 = \mathfrak{S}(d) \frac{d^{k - 2}}{(k - 2)!} + R_{k}(d),
\]
Odlyzko, Rubinstein and Wolf proved that
\[
R_{k}(d)
 = O_{k}\left(\frac{d^{k - 2}}{\log\log d}\right).
\]
By a more elaborate method, we have improved this to
\[
R_{k}(d)
 = O_{k}(d^{k - 3 + \varepsilon}), \quad \mbox{for any $\varepsilon > 0$}.
\]
However, stronger results are probably true.

Odlyzko, Rubinstein and Wolf also presented numerical evidence that suggests that
\[
R_{k}(d)
 = O_{k}(\mathfrak{S}(d) d^{k - 3} \log d).
\]
Perhaps this can be replaced by an asymptotic formula.

By comparison, earlier work has been concerned with the average of the singular series over all the components of $\mathcal{D}$. For example, in 2004, Montgomery and Soundararajan \cite{MontgomerySoundararajan2004} proved that
\begin{align*}
\sum_{\substack{1 \leq d_{1}, \ldots, d_{k} \leq d \\ \text{distinct}}} \mathfrak{S}(\mathcal{D})
 &= d^{k} - \binom{k}{2} d^{k - 1} \log d + \binom{k}{2} (1 - \gamma - \log 2 \pi) d^{k - 1} \\ &\hspace{6cm} + O(d^{k - 3 / 2 + \varepsilon}),
\end{align*}
for a fixed $k \geq 2$, where $\gamma$ is Euler's constant. Their result strengthens the asymptotic formula originally proved by Gallagher \cite{Gallagher1976}, \cite{Gallagher1981} in 1976. It would be interesting to see if the method of Montgomery and Soundararajan can be made to apply to $A_{k}(d)$.

For our present purpose, we only require the simplest asymptotic result for the singular series averages $A_{k}(d)$ and $A_{k}(d, D)$ in Lemma \ref{lemma-two}. For this, we use an unpublished method of Ford \cite{Ford2007}, who gave a simple proof of the asymptotic formula above, but with the weaker error term $O(d^{k} / \log\log d)$. Ford's proof is based on the formula
\begin{equation} \label{equation-twelve-phiD}
\begin{split}
\phi(n, \mathcal{D})
 &= \sum_{\substack{1 \leq j \leq n \\ (Q_{\mathcal{D}}(j), n) = 1}} 1 \\
 &= n \prod_{p \mid n} \left(1 - \frac{\nu_{\mathcal{D}}(p)}{p}\right),
\end{split}
\end{equation}
where
\[
Q_{\mathcal{D}}(n)
 = \prod_{j = 1}^{k} (n + d_{j}).
\]
Here, we note that if $\abs{\mathcal{D}} = 1$ (so that $k = 1$), then \eqref{equation-twelve-phiD} reduces to the usual formula for the Euler totient function
\[
\phi(n)
 = n\prod_{p \mid n} \left(1 - \frac{1}{p}\right).
\]

To verify \eqref{equation-twelve-phiD}, we first detect the relatively prime condition with the M{\"o}bius function. We write
\begin{align*}
\phi(n, \mathcal{D})
 &= \sum_{1 \leq j \leq n} \sum_{\substack{d \mid n \\ d \mid Q_{\mathcal{D}}(j)}} \mu(d) \\
 &= \sum_{d \mid n} \mu(d) \sum_{\substack{1 \leq j \leq n \\ d \mid Q_{\mathcal{D}}(j)}} 1.
\end{align*}
The inner sum on $j$ on the far right-hand side is to be evaluated when $d$ is squarefree. By the Chinese remainder theorem, this sum is a multiplicative function in $d$ for squarefree $d$. For the case of $d = p$, we solve $p \mid Q_{\mathcal{D}}(j)$ by taking $j \equiv -d_{i} \pmod{p}$, where $1 \leq i \leq k$, among which we get $\nu_{\mathcal{D}}(p)$ distinct solutions for $j \pmod{p}$. For each of these distinct solutions, $j$ will run over a progression modulo $p$ with exactly $n / p$ terms in the range $1 \leq j \leq n$. Hence, for squarefree $d$, we have
\[
\sum_{\substack{1 \leq j \leq n \\ d \mid Q_{\mathcal{D}}(j)}}1
 = n \frac{\nu_{\mathcal{D}}(d)}{d},
\]
and on substituting we obtain the required result.

We now use \eqref{equation-twelve-phiD} to derive a useful expression for a truncated product of $\mathfrak{S}(\mathcal{D})$. Let 
\[
\mathfrak{S}_y(\mathcal{D})
 = \prod_{p \leq y} \left(1 - \frac{1}{p}\right)^{-k} \left(1 - \frac{\nu_{\mathcal{D}}(p)}{p}\right).
\]
Putting $p^{\sharp}(y) = \lfloor y\rfloor_{p_{j}^{\sharp}}$, from \eqref{equation-twelve-phiD} we deduce that
\begin{equation} \label{equation-thirteen-FordFormula}
\mathfrak{S}_{y} (\mathcal{D})
 = \frac{\phi(p^{\sharp}(y), \mathcal{D})}{p^{\sharp}(y)} \prod_{p \leq y} \left(1 - \frac{1}{p}\right)^{-k}.
\end{equation}

Next, we shall need the following elementary estimates. Suppose $1 \leq d_{1}, \ldots, d_{k} \leq h$. Then we have
\begin{equation} \label{equation-fourteen-Singular1}
\mathfrak{S}(\mathcal{D})
 = \left[1 + O_{k}\left(\frac{1}{\log\log h}\right)\right] \mathfrak{S}_{y}(\mathcal{D}),
\end{equation}
if $y \gg \log h$, and we have
\begin{equation} \label{equation-fifteen-Singular2}
\mathfrak{S}_{y}(\mathcal{D})
 = O_{k}((\log\log h)^{k - 1}),
\end{equation}
if $y = O(\log h)$.

To establish \eqref{equation-fourteen-Singular1}, we first note that the result is trivial if both singular series $\mathfrak{S}(\mathcal{D})$ and $\mathfrak{S}_{y}(\mathcal{D})$ are simultaneously equal to zero. In fact, these singular series will both be equal to zero or both not equal to zero when $y > k$. This is because, in order for either singular series to be zero, there must exist a prime $p$ for which $\nu_{\mathcal{D}}(p) = p$. Furthermore, since $\nu_{\mathcal{D}}(p) \leq k$, it is necessary that $p \leq k$. Therefore, we may assume that both singular series are not zero for the remainder of the proof.

We now let
\[
\Delta
 = \prod_{1 \leq i < j \leq k} \abs{d_{j} - d_{i}},
\]
and observe that $\nu_{\mathcal{D}}(p) = k$ if and only if $p \nmid \Delta$. Hence, we have
\[
\left(1 - \frac{1}{p}\right)^{-k} \left(1 - \frac{\nu_{\mathcal{D}}(p)}{p}\right)
 \ll \left\{ \begin{array}{ll}
   1 + O_{k}\left(\dfrac{1}{p}\right), & \mbox{if $p \mid \Delta$;} \\ [3ex]
   1 + O_{k}\left(\dfrac{1}{p^{2}}\right), & \mbox{if $p \nmid \Delta$.}
\end{array}
\right.
\]

On noting that $\Delta = O(h^{k^{2}})$ and $\omega(n) = O(\log n / \log\log n)$, where $\omega(n)$ denotes, as usual, the number of distinct prime factors of an integer $n$, we have $\omega(\Delta) = O_{k}(\log h / \log\log h)$. Therefore, we have
\begin{align*}
\frac{\mathfrak{S}(\mathcal{D})}{\mathfrak{S}_{y}(\mathcal{D})}
 &= \prod_{\substack{p \mid \Delta \\ p > y}} \left[1+ O_{k} \left(\frac{1}{p}\right)\right] \prod_{\substack{p \nmid \Delta \\ p > y}} \left[1 + O_{k} \left(\frac{1}{p^{2}}\right)\right] \\
 &= \exp\left(S_{I} + S_{II}\right),
\end{align*}
where 
\[
S_{I}
 = \sum_{\substack{p \mid \Delta \\ p > y}} \log \left[1 + O_{k} \left(\frac{1}{p}\right)\right]
 = O_{k}\left(\sum_{\substack{p \mid \Delta \\ p > y}}\frac{1}{p}\right)
 = O_{k}\left(\frac{\omega(\Delta)}{y}\right)
 = O_{k}\left(\frac{\log h}{y \log\log h}\right)
\]
and
\[
S_{II}
 = \sum_{\substack{p \nmid \Delta \\ p > y}} \log \left[1 + O_{k} \left(\frac{1}{p^{2}}\right)\right]
 = O_{k}\left( \sum_{p > y}\frac{1}{p^{2}}\right)
 = O_{k}\left( \frac{1}{y}\right).
\]
Hence, if $y \gg \log h$, we have
\begin{align*}
\frac{\mathfrak{S}(\mathcal{D})}{\mathfrak{S}_{y}(\mathcal{D})}
 &= \exp\left(O_{k}\left(\frac{1}{\log\log h}\right)\right) \\
 &= 1 + O_{k}\left(\frac{1}{\log\log h}\right),
\end{align*}
which proves \eqref{equation-fourteen-Singular1}.

To obtain \eqref{equation-fifteen-Singular2}, we simply note that $\nu_{\mathcal{D}}(p) \geq 1$ and deduce from Mertens's formula that
\begin{align*}
\mathfrak{S}_{y} (\mathcal{D})
 &\leq \prod_{p \leq y} \left(1 - \frac{1}{p}\right)^{1 - k} \\
 &= O_{k}\left(\exp\left((k - 1) \sum_{p \leq y}\frac{1}{p}\right)\right) \\
 &= O_{k}(\exp\left((k - 1) \log\log y\right)) \\
 &= O_{k}((\log\log h)^{k - 1}),
\end{align*}
if $y = O(\log h)$. Note that if we take $y=\log h$ in \eqref{equation-fourteen-Singular1} and \eqref{equation-fifteen-Singular2}, we obtain the sharp estimate
\begin{equation} \label{equation-sixteen-Singulargoodbound}
\mathfrak{S}(\mathcal{D})
 = O_{k}((\log\log h)^{k - 1}).
\end{equation}

For the proof of Lemma \ref{lemma-two}, we require a preliminary result.

\begin{lemma} \label{lemma-six}
For $\mathcal{D} \subset [0, h]$, $H \leq h$ and any $\varepsilon > 0$, we have
\[
\sum_{1 \leq d_{0} \leq H} \mathfrak{S}(\mathcal{D} \cup \{d_{0}\})
 = \mathfrak{S}(\mathcal{D}) H\left[1 + O_{k}\left(\frac{1}{\log\log H}\right)\right] + O(h^{\varepsilon}).
\]
\end{lemma}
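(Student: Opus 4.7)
The plan is to reduce the sum to a calculation with the truncated singular series $\mathfrak{S}_y(\mathcal{D} \cup \{d_0\})$, which is periodic in $d_0$ with period $p^\sharp(y)$. I would choose $y = (\varepsilon/2)\log h$, so that by the prime number theorem $p^\sharp(y) = h^{\varepsilon/2 + o(1)} \le h^\varepsilon$ for $h$ large, while $y \gg \log h$ guarantees that \eqref{equation-fourteen-Singular1}, applied to $\mathcal{D} \cup \{d_0\} \subset [0, h]$, yields
\[
\mathfrak{S}(\mathcal{D} \cup \{d_0\}) = \mathfrak{S}_y(\mathcal{D} \cup \{d_0\}) \left[1 + O_{k,\varepsilon}\!\left(\frac{1}{\log\log h}\right)\right]
\]
uniformly in $d_0 \in [1, H]$, and similarly for $\mathfrak{S}(\mathcal{D})$.

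The heart of the argument is the following period-sum identity, derived from Ford's formula \eqref{equation-thirteen-FordFormula}:
\[
\sum_{d_0 = 1}^{p^\sharp(y)} \mathfrak{S}_y(\mathcal{D} \cup \{d_0\}) = p^\sharp(y)\,\mathfrak{S}_y(\mathcal{D}).
\]
To prove it, I would unwind $\phi(p^\sharp(y), \mathcal{D} \cup \{d_0\})$ as a count of $j \in [1, p^\sharp(y)]$ with both $(Q_{\mathcal{D}}(j), p^\sharp(y)) = 1$ and $(j + d_0, p^\sharp(y)) = 1$, swap the order of summation over $(j, d_0)$, and observe that for each admissible $j$ the number of valid $d_0$ modulo $p^\sharp(y)$ is exactly $\phi(p^\sharp(y))$. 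Hence the double sum collapses to $\phi(p^\sharp(y), \mathcal{D})\,\phi(p^\sharp(y))$, and substituting into \eqref{equation-thirteen-FordFormula} together with $\phi(p^\sharp(y))/p^\sharp(y) = \prod_{p \le y}(1 - 1/p)$ gives the identity. Writing $H = m p^\sharp(y) + r$ with $0 \le r < p^\sharp(y)$ and exploiting periodicity, I obtain
\[
\sum_{d_0 = 1}^{H} \mathfrak{S}_y(\mathcal{D} \cup \{d_0\}) = \mathfrak{S}_y(\mathcal{D})\,H + O\!\left(p^\sharp(y) \max_{d_0} \mathfrak{S}_y(\mathcal{D} \cup \{d_0\})\right).
\]

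To close out, \eqref{equation-fifteen-Singular2} applied with $k+1$ in place of $k$ gives $\max_{d_0} \mathfrak{S}_y(\mathcal{D} \cup \{d_0\}) = O_k((\log\log h)^k)$, so the error term above is $O(h^{\varepsilon/2}(\log\log h)^k) = O(h^\varepsilon)$. Converting back via \eqref{equation-fourteen-Singular1} then yields
\[
\sum_{1 \le d_0 \le H} \mathfrak{S}(\mathcal{D} \cup \{d_0\}) = \mathfrak{S}(\mathcal{D})\,H\left[1 + O_{k,\varepsilon}\!\left(\frac{1}{\log\log h}\right)\right] + O(h^\varepsilon),
\]
and the lemma follows from $\log\log H \le \log\log h$. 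The main technical obstacle is the calibration of $y$: it must be small enough for the period $p^\sharp(y)$ to fit inside the permitted absolute error $h^\varepsilon$, yet large enough ($y \gg \log h$) that \eqref{equation-fourteen-Singular1} holds uniformly over all relevant tuples $\mathcal{D} \cup \{d_0\}$. The choice $y = (\varepsilon/2)\log h$ threads this needle, and the remaining steps are essentially bookkeeping.
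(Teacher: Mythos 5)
Your proof is correct and rests on the same foundation as the paper's: Ford's formula \eqref{equation-thirteen-FordFormula}, unwinding $\phi(p^{\sharp}(y), \mathcal{D} \cup \{d_0\})$, swapping the $(j, d_0)$ sum, and observing that the inner $d_0$-sum is a count over a residue class. The difference is in how that inner count is closed out. The paper keeps $y = \log h$ and estimates $\sum_{1 \le d_0 \le H, (j + d_0, p^{\sharp}(y)) = 1} 1$ directly by M\"obius inversion, absorbing an $O(1)$ error per squarefree divisor into an $O(2^{\pi(y)}) = h^{o(1)}$ term; since $2^{\pi(y)}$ is far smaller than $p^{\sharp}(y)$, the larger choice of $y$ is harmless. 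You instead isolate the exact period-sum identity $\sum_{d_0 = 1}^{p^{\sharp}(y)} \mathfrak{S}_y(\mathcal{D} \cup \{d_0\}) = p^{\sharp}(y)\,\mathfrak{S}_y(\mathcal{D})$, which is a cleaner structural statement, and handle general $H$ by periodicity of $\mathfrak{S}_y(\mathcal{D}\cup\{d_0\})$ in $d_0$. That forces the remainder to be $O(p^{\sharp}(y)\max_{d_0}\mathfrak{S}_y)$, so you must shrink $y$ to $(\varepsilon/2)\log h$ to make $p^{\sharp}(y) \le h^{\varepsilon/2+o(1)}$. The price is that \eqref{equation-fourteen-Singular1} with $y \asymp_\varepsilon \log h$ gives a multiplicative error $O_{k,\varepsilon}(1/\log\log H)$ rather than the $O_k(1/\log\log H)$ the paper obtains with $y = \log h$. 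Since $\varepsilon$ is fixed in all applications, this $\varepsilon$-dependence is cosmetic, but it is worth noting that the paper's larger $y$ decouples the main-term constant from $\varepsilon$ and only charges the $\varepsilon$-dependence to the additive $O(h^{\varepsilon})$ term. Both arguments are sound; yours trades a slightly weaker implicit constant for a tidier exact identity.
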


\begin{proof}
We set $y = \log h$ and apply \eqref{equation-fourteen-Singular1} and \eqref{equation-thirteen-FordFormula}, in this order, to obtain
\begin{align*}
\sum_{1 \leq d_{0} \leq H} \mathfrak{S}(\mathcal{D} \cup \{d_{0}\})
 &= \left[1 + O_{k} \left(\frac{1}{\log\log H}\right)\right] \sum_{1 \leq d_{0} \leq H} \mathfrak{S}_{y} (\mathcal{D} \cup \{d_{0}\}) \\
 &= \left[1 + O_{k} \left(\frac{1}{\log\log H}\right)\right] \frac{1}{p^{\sharp}(y)} \prod_{p \leq y} \left(1-\frac{1}{p}\right)^{-k - 1} \\ &\hspace{4.5cm} \times \sum_{1 \leq d_{0} \leq H} \phi(p^{\sharp}(y), \mathcal{D} \cup \{d_{0}\}).
\end{align*}
It remains only to consider the sum on the far right-hand side.

We have
\begin{align*}
\sum_{1 \leq d_{0} \leq H} \phi(p^{\sharp}(y), \mathcal{D} \cup \{d_{0}\})
 &= \sum_{1 \leq d_{0} \leq H} \sum_{\substack{1 \leq j \leq p^{\sharp}(y) \\ (Q_{\mathcal{D} \cup \{d_{0}\}}(j), p^{\sharp}(y)) = 1}} 1 \\
 &= \sum_{\substack{1 \leq j \leq p^{\sharp}(y) \\ (Q_{\mathcal{D}}(j), p^{\sharp}(y)) = 1}} \sum_{\substack{1 \leq d_{0} \leq H \\ (j + d_{0}, p^{\sharp}(y)) = 1}} 1.
\end{align*}
To treat the inner sum on $d_{0}$ on the far right-hand side, we write
\begin{align*}
\sum_{\substack{1 \leq d_{0} \leq H \\ (j + d_{0}, p^{\sharp}(y)) = 1}} 1
 &= \sum_{d \mid p^{\sharp}(y)} \mu(d) \sum_{\substack{1 \leq d_{0} \leq H \\ d_{0} \equiv -j \pmod{d}}} 1 \\
 &= \sum_{d \mid p^{\sharp}(y)} \mu(d) \left(\frac{H}{d} + O(1)\right) \\
 &= H \prod_{p \leq y} \left(1 - \frac{1}{p}\right) + O(2^{\pi(y)}),
\end{align*}
where $\pi(y)$ gives the number of primes less than or equal to $y$.

Therefore, we have
\[
\sum_{1 \leq d_{0} \leq H} \phi(p^{\sharp}(y), \mathcal{D} \cup \{d_{0}\})
 = \phi(p^{\sharp}(y), \mathcal{D})\left[H \prod_{p \leq y} \left(1 - \frac{1}{p}\right) + O( 2^{\pi(y)})\right].
\]
Finally, from \eqref{equation-fourteen-Singular1} and \eqref{equation-fifteen-Singular2}, we see that
\begin{align*}
\sum_{1 \leq d_{0} \leq H} \mathfrak{S}(\mathcal{D} \cup \{d_{0}\})
 &= H \left[1 + O_{k}\left(\frac{1}{\log\log H}\right)\right] \frac{\phi(p^{\sharp}(y), \mathcal{D})}{p^{\sharp}(y)} \prod_{p \leq y} \left(1 - \frac{1}{p}\right)^{-k} \\
  &\hspace{3.5cm} + O\left(2^{\pi(y)} \prod_{p \leq y}\left(1 - \frac{1}{p}\right)^{-k - 1}\right) \\
 &= \mathfrak{S}_y(\mathcal{D}) H \left[1 + O_{k}\left(\frac{1}{\log\log H}\right)\right] + O(h^{\varepsilon}) \\
 &= \mathfrak{S}(\mathcal{D}) H \left[1 + O_{k}\left(\frac{1}{\log\log H}\right)\right] + O(h^{\varepsilon}).
 \end{align*}
This finishes the proof of the lemma.
\end{proof}

We take up now the postponed proof of Lemma \ref{lemma-two}.

\begin{proof}[Proof of Lemma \ref{lemma-two}]
We have
\begin{align*}
A_{k}(d, D)
 &= \frac{1}{(k-2)!} \sum_{1 \leq d_{1}, \ldots, d_{k - 2} < D} \mathfrak{S}(\{0, d_{1}, \ldots, d_{k - 2}, d\}) \\ &\hspace{5cm} + O_{k}(D^{k - 3}(\log\log h)^{k - 1}).
\end{align*}
This is because there are $(k - 2)!$ permutations of the summands when they are ordered by the inequalities in the definition of $A_{k}(d, D)$. The error term comes from including the terms when one or more of the summands are equal and applying \eqref{equation-sixteen-Singulargoodbound}. Then the required result follows by applying Lemma \ref{lemma-six} repeatedly to the main term above.
\end{proof}

\bibliographystyle{amsplain}

\end{document}